\DeclareMathOperator{\Con}{Con}
\newtheorem{theorem}{Theorem}[section]
\newtheorem{definition}[theorem]{Definition}
\newtheorem{lemma}[theorem]{Lemma}
\newtheorem{remark}[theorem]{Remark}
\newtheorem{example}[theorem]{Example}
\title{Sheffer operation in relational systems}
\author{Ivan~Chajda and Helmut~L\"anger}
\date{}
\begin{document}

\footnotetext{Support of the research by the Austrian Science Fund (FWF), project I~4579-N, and the Czech Science Foundation (GA\v CR), project 20-09869L, entitled ``The many facets of orthomodularity'', as well as by \"OAD, project CZ~02/2019, entitled ``Function algebras and ordered structures related to logic and data fusion'', and, concerning the first author, by IGA, project P\v rF~2020~014, is gratefully acknowledged.}

\maketitle

\begin{abstract}
The concept of a Sheffer operation known for Boolean algebras and orthomodular lattices is extended to arbitrary directed relational systems with involution. It is proved that to every such relational system there can be assigned a Sheffer groupoid and also, conversely, every Sheffer groupoid induces a directed relational system with involution. Hence, investigations of these relational systems can be transformed to the treaty of special groupoids which form a variety of algebras. If the Sheffer operation is also commutative then the induced binary relation is antisymmetric. Moreover, commutative Sheffer groupoids form a congruence distributive variety. We characterize symmertry, antisymmetry and treansitivity of binary relations by identities and quasi-identities satisfied by an assigned Sheffer operation. The concepts of twist-products of relational systems and of Kleene relational systems are introduced. We prove that every directed relational system can be embedded into a directed relational system with involution via the twist-product construction. If the relation in question is even transitive, then the directed relational system can be embedded into a Kleene relational system. Any Sheffer operation assigned to a directed relational system $\mathbf A$ with involution induces a Sheffer operation assigned to the twist-product of $\mathbf A$.
\end{abstract}

{\bf AMS Subject Classification:} 08A02, 08A05, 08A40, 05C76

{\bf Keywords:} Relational system, directed relational system, involution, Sheffer operation, Sheffer groupoid, twist-product, Kleene relational system

\section{Introduction}

Relational systems form one of the most general mathematical structures. Almost all structures appearing in algebra can be considered as relational structures. Such structures were studied for a long lime, see the pioneering work by J.~Riguet (\cite R) from 1948 containing elementary properties and constructions with binary relations and the paper by R.~Fraiss\'e (\cite F) from 1954. On the other hand, in contrast to publications in algebra, not so many of papers are devoted to relational systems. One of the reasons is that there are not so powerful tools for investigating relations as there are for algebras. This is also the reason why relational systems do not appear so often in applications both in mathematics and outside. One important application of relational systems are e.g.\ Kripke systems used in the formalization of several non-classical logical systems.

The authors introduced formerly several methods where relational systems are connected with various accompanying algebras and hence their properties can be transformed into algebraic language and the problems are solved by tools developed in general algebra. Let us mention e.g.\ \cite{CL13} and \cite{CL16a} where certain groupoids similar to directoids are assigned or \cite{CL16b} and \cite{CLS} where this approach is applied to relational systems equipped with a unary operation. For ternary relations, such an approach was used in \cite{CKL}. However, the spectrum of used algebraic tools is not restricted only to these more or less elementary cases, in \cite{BC} relational systems are treated similarly as residuated ordered sets. In the present paper we extend this list of used tools by the so-called Sheffer operation.

Remember that the Sheffer operation introduced by H.~M.~Sheffer (\cite S) in 1913 was used in Boolean algebras as a very successful tool since this operation can replace all other Boolean operations. Namely, every Boolean operation, both basic or derived, can be expressed by repeatedly using the Sheffer operation, see e.g.\ \cite B. In today terminology, the clone of Boolean functions is generated by the Sheffer operation. This has a surprising and very successful application in technology because in switching circles, in particular in computer processors, it suffices to use only one binary operation, namely the Sheffer one. Then the technology of production of such chips is much easier and cheaper than it was in the beginning of computer era when several parts of the computer were composed by at least two different kinds of diodes (e.g.\ one for conjunction and the other one for negation). As it was shown by the first author in \cite C, a Sheffer operation can be introduced not only in Boolean algebras but also in orthomodular lattices or even in ortholattices (see \cite B for these concepts). These algebras form an algebraic axiomatization of the logic of quantum mechanics. Since not all authors agreed that such lattices are suitable for modeling the propositional calculus of the logic of quantum mechanics, they recognized that disjunction in this logic not necessarily exists for all elements, i.e.\ that the supremum of two elements need not exist if the these elements are not orthogonal. Hence so-called orthomodular posets and orthoposets were introduced. This was the reason why the concept of Sheffer operation was transferred from ortholattices to orthomodular posets and orthoposets, or, more generally, to ordered sets with an involution or a complementation, see \cite{CK}.

The next natural step is to extend this method from posets to more general relational systems. In order to avoid difficulties with not everywhere defined operations and some other drawbacks, we consider so-called directed relational systems where the relation is reflexive and equipped with a unary involution operation. The authors show that also in this case a kind of Sheffer operation can be introduced and the corresponding groupoid characterizes the given relational system. The benefit of this method is two-fold. At first, we show that similarly as for Boolean algebras, using an assigned Sheffer operation we can conversely recover not only the involution but also the given binary relation. Hence the Sheffer operation reduces the type of the relational system and substitutes a binary relation by an everywhere defined operation. And secondly, we show that some basic properties of binary relations can be characterized with advantage by using this operation and that the Sheffer operation enables also more advanced constructions for relational systems not considered so far.

\section{Basic concepts}

The Sheffer operation was introduced by H.~M.~Sheffer (\cite S) in Boolean algebras. If $\mathbf B=(B,\vee,\wedge,{}',0,1)$ is a Boolean algebra and one defines
\[
x|y:=x'\vee y'
\]
then $|$ is just the Sheffer operation on $\mathbf B$. For our reasons, we define it as follows.

\begin{definition}\label{def1}
A {\em Sheffer operation} on a non-void set $A$ is a binary operation $|$ on $A$ satisfying the following identities:
\begin{eqnarray}
& & (x|y)|(x|x)\approx x,\label{equ1} \\
& & (x|y)|(y|y)\approx y.\label{equ2}
\end{eqnarray}
A {\em Sheffer groupoid} is a groupoid $(A,|)$ where $|$ is a Sheffer operation on $A$.
\end{definition}

Hence, the class of Sheffer groupoids forms a variety of algebras.

\begin{example}\label{ex1}
If $A:=\{a,b,c,d\}$ and the binary operation $|$ on $A$ is defined by
\[
\begin{array}{c|cccc}
| & a & b & c & d \\
\hline
a & a & c & d & c \\
b & c & b & d & c \\
c & a & b & d & c \\
d & a & b & d & c
\end{array}
\]
then $(A,|)$ is a Sheffer groupoid.
\end{example}

It is worth noticing that the Sheffer operation in a Boolean algebra satisfies the identities (\ref{equ1}) and (\ref{equ2}) and hence our new concept is sound.

An {\em antitone involution} on a lattice $(L,\vee,\wedge)$ is a unary operation $'$ on $L$ satisfying
\begin{enumerate}[(i)]
\item $x''\approx x$,
\item $x\leq y$ implies $y'\leq x'$
\end{enumerate}
for all $x,y\in L$.

The following lemma was shown for ortholattices in \cite C.

\begin{lemma}
Let $(L,\vee,\wedge,{}')$ be a lattice with an antitone involution. Then {\rm(i)} and {\rm(ii)} hold:
\begin{enumerate}[{\rm(i)}]
\item If $x|y:=x'\vee y'$ for all $x,y\in L$ then $(L,|)$ is a Sheffer groupoid.
\item If $x|y:=x'\wedge y'$ for all $x,y\in L$ then $(L,|)$ is a Sheffer groupoid.
\end{enumerate}
\end{lemma}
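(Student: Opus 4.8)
The plan is to verify directly that each of the two operations defined in (i) and (ii) satisfies the identities \eqref{equ1} and \eqref{equ2} of Definition~\ref{def1}. Since the only hypotheses are the two defining properties of an antitone involution, namely $x''\approx x$ and the antitonicity $x\le y\Rightarrow y'\le x'$, the first step is to derive the De~Morgan laws $(x\vee y)'\approx x'\wedge y'$ and $(x\wedge y)'\approx x'\vee y'$. For the first of these, antitonicity applied to $x\le x\vee y$ and $y\le x\vee y$ gives $(x\vee y)'\le x'\wedge y'$; conversely, applying $'$ to $x'\wedge y'\le x'$ and $x'\wedge y'\le y'$ and using $x''\approx x$ yields $x\vee y\le(x'\wedge y')'$, and one more application of $'$ gives $x'\wedge y'\le(x\vee y)'$, so equality holds. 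The second De~Morgan law is obtained by substituting $x\mapsto x'$, $y\mapsto y'$ (equivalently, by passing to the order dual, under which $'$ is again an antitone involution).

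Next I would compute the ``diagonal'' values. In case (i) one has $x|x=x'\vee x'=x'$, and then
\[
(x|y)|(x|x)=(x'\vee y')|x'=(x'\vee y')'\vee x''=(x''\wedge y'')\vee x=(x\wedge y)\vee x=x
\]
using the first De~Morgan law, $x''\approx x$, and the absorption law; identity \eqref{equ2} follows in the symmetric way from $(x|y)|(y|y)=(x'\vee y')|y'=(x\wedge y)\vee y=y$. In case (ii) one again gets $x|x=x'$, and the same bookkeeping with the second De~Morgan law and the dual absorption law gives $(x|y)|(x|x)=(x'\wedge y')|x'=(x'\wedge y')'\wedge x''=(x\vee y)\wedge x=x$, and likewise $(x|y)|(y|y)=y$.

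No step here is a genuine obstacle; the whole argument is a short chain of applications of De~Morgan, double negation and absorption. If one must single out the part needing care, it is the derivation of the De~Morgan laws from the two axioms, since everything afterwards is routine. I would also remark that (ii) is just the lattice dual of (i), so in a written proof it suffices to carry out one case in full and invoke duality for the other.
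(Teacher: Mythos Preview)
Your proof is correct and follows the same route as the paper: compute $x|x\approx x'$ and then rewrite \eqref{equ1} and \eqref{equ2} as the lattice identities $(x'\vee y')'\vee x''\approx x$, $(x'\vee y')'\vee y''\approx y$ (respectively with $\wedge$), which hold by De~Morgan and absorption. The paper merely records these equivalent forms and leaves their verification implicit, whereas you spell out the derivation of the De~Morgan laws and the absorption step; your write-up is thus a more detailed version of the same argument.
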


\begin{proof}
\
\begin{enumerate}[(i)]
\item Since $x|x\approx x'\vee x'\approx x'$, (\ref{equ1}) and (\ref{equ2}) are equivalent to
\begin{align*}
(x'\vee y')'\vee x'' & \approx x, \\
(x'\vee y')'\vee y'' & \approx y,
\end{align*}
respectively.
\item Since $x|x\approx x'\wedge x'\approx x'$, (\ref{equ1}) and (\ref{equ2}) are equivalent to
\begin{align*}
(x'\wedge y')'\wedge x'' & \approx x, \\
(x'\wedge y')'\wedge y'' & \approx y,
\end{align*}
respectively.
\end{enumerate}
\end{proof}

\begin{lemma}
Axioms {\rm(\ref{equ1})} and {\rm(\ref{equ2})} are independent.
\end{lemma}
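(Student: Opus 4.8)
The plan is to exhibit two groupoids: one satisfying (\ref{equ1}) but not (\ref{equ2}), and one satisfying (\ref{equ2}) but not (\ref{equ1}). Together these show that neither identity follows from the other within the class of all groupoids, which is exactly the asserted independence. The natural candidates to examine are the ``degenerate'' operations, namely the two projections, together with small perturbations of them.

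For the first half I would take $A:=\{0,1\}$ with the left projection $x|y:=x$. Since $x|x\approx x$, the term $(x|y)|(x|x)$ simplifies to $(x|y)|x=x|x=x$, so (\ref{equ1}) holds with essentially no computation. But $(x|y)|(y|y)=x|y=x$, so (\ref{equ2}) would force $x\approx y$, which already fails at $x=0$, $y=1$. Hence this groupoid witnesses that (\ref{equ1}) does not imply (\ref{equ2}).

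For the second half the right projection $x|y:=y$ is useless, since it satisfies both identities, and a brief inspection of the few two-element operations validating (\ref{equ2}) shows that all of them also validate (\ref{equ1}); so I would move to $A:=\{0,1,2\}$ and perturb the right projection in a single entry, putting $x|y:=y$ for all $(x,y)\ne(0,1)$ and $0|1:=2$, i.e.
\[
\begin{array}{c|ccc}
| & 0 & 1 & 2 \\
\hline
0 & 0 & 2 & 2 \\
1 & 0 & 1 & 2 \\
2 & 0 & 1 & 2
\end{array}
\]
Now (\ref{equ1}) fails at once: for $x=1$, $y=0$ we get $(1|0)|(1|1)=0|1=2\ne1$. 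The step that needs real care --- and the only genuine obstacle in the argument --- is checking that this one altered entry does not spoil (\ref{equ2}). Here one uses that $y|y=y$ for every $y$ (the exceptional product $0|1$ involves two distinct arguments), so $(x|y)|(y|y)=(x|y)|y$; for $(x,y)\ne(0,1)$ this equals $y|y=y$, and for $(x,y)=(0,1)$ it equals $2|1=1=y$. Thus (\ref{equ2}) holds throughout this groupoid while (\ref{equ1}) does not, and together with the first example this establishes the independence of (\ref{equ1}) and (\ref{equ2}).
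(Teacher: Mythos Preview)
Your proof is correct and follows essentially the same approach as the paper: exhibit a small groupoid satisfying (\ref{equ1}) but not (\ref{equ2}), and another satisfying (\ref{equ2}) but not (\ref{equ1}). Your first example is identical to the paper's (the left projection on a two-element set); for the second, the paper also uses a three-element groupoid, though with a different operation table than your one-entry perturbation of the right projection --- both work, and your construction is arguably better motivated.
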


\begin{proof}
if $A:=\{a,b\}$ and the binary operation $|$ on $A$ is defined by $x|y:=x$ for all $x\in A$ then $|$ satisfies (\ref{equ1}), but not (\ref{equ2}) since $(a|b)|(b|b)=a|b=a\neq b$, and if $A:=\{a,b,c\}$ and the binary operation $|$ on $A$ is defined by
\[
\begin{array}{c|ccc}
| & a & b & c \\
\hline
a & a & b & c \\
b & c & b & c \\
c & a & a & c
\end{array}
\]
then $|$ satisfies (\ref{equ2}), but not (\ref{equ1}) since $(a|b)|(a|a)=b|a=c\neq a$.
\end{proof}

Let us recall some concepts from theory of relations.

Let $A$ be a non-void set, $a,b\in A$, $R$ a binary relation on $A$ and $'$ a unary operation on $A$. We define
\begin{align*}
U(a,b) & :=\{x\in A\mid(a,x),(b,x)\in R\}, \\
L(a,b) & :=\{x\in A\mid(x,a),(x,b)\in R\}
\end{align*}
and call these set the {\em upper cone} and {\em lower cone} of $a$ and $b$ with respect to $R$, respectively. The relational system $\mathbf A=(A,R)$ is called {\em directed} if $U(x,y)\neq\emptyset$ and $L(x,y)\neq\emptyset$ for all $x,y\in A$. The operation $'$ is called {\em antitone} if $(x,y)\in R$ implies $(y',x')\in R$ and an {\em involution} on $\mathbf A$ if it is antitone and if it satisfies the identity $x''\approx x$. It can be shown that in a relational system $(A,R,{}')$ with involution, if $U(x,y)\neq\emptyset$ for all $x,y\in A$ then $L(x,y)\neq\emptyset$ for all $x,y\in A$ since $L(x,y)\approx(U(x',y'))'$, where $B':=\{b'\mid b\in B\}$ for every subset $B$ of $A$.

\begin{definition}
A {\em directed relational system with involution} is an ordered triple $(A,R,{}')$ consisting of a non-void set $A$, a binary relation $R$ on $A$ and a unary operation $'$ on $A$ satisfying the following conditions:
\begin{eqnarray}
& & R\text{ is reflexive},\label{equ4} \\
& & (A,R)\text{ is directed},\label{equ5} \\
& & '\text{ is an involution on }(A,R).\label{equ6}
\end{eqnarray}
\end{definition}

\section{Representation of relational systems by Sheffer \\
groupoids}

The following result shows how a Sheffer groupoid is connected with a directed relational system with involution.

\begin{theorem}\label{th2}
Let $\mathbf A=(A,|)$ be a Sheffer groupoid and define a unary operation $'$ on $A$ and a binary relation $R$ on $A$ by
\begin{align*}
x' & :=x|x\text{ for all }x\in A, \\
 R & :=\{(x,y)\in A^2\mid x'|y'=y\}. 
\end{align*}
Then $\mathbb R(\mathbf A):=(A,R,{}')$ is a directed relational system with involution, the so-called {\em directed relational system with involution induced by $\mathbf A$}.
\end{theorem}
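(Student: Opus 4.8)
The plan is to verify the three defining conditions \eqref{equ4}, \eqref{equ5}, \eqref{equ6} of a directed relational system with involution directly from the Sheffer axioms \eqref{equ1}, \eqref{equ2}, using throughout that $x'=x|x$, so that the axioms read $(x|y)|x'\approx x$ and $(x|y)|y'\approx y$. First I would record two immediate consequences. Putting $y:=x$ in \eqref{equ1} gives $(x|x)|(x|x)\approx x$, i.e.\ $x'|x'\approx x$; since $x''=(x')|(x')=x'|x'$, this yields at once the involution identity $x''\approx x$, and it also gives reflexivity of $R$, because $(x,x)\in R$ means exactly $x'|x'=x$. So \eqref{equ4} holds and half of \eqref{equ6} is done.

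For the antitonicity required in \eqref{equ6}, assume $(x,y)\in R$, that is, $y=x'|y'$. Applying \eqref{equ1} with $x,y$ replaced by $x',y'$ gives $(x'|y')|x''\approx x'$, hence $(x'|y')|x=x'$ using $x''=x$; substituting $y=x'|y'$ this reads $y|x=x'$, which is precisely the assertion $(y',x')\in R$. Thus $'$ is an involution on $(A,R)$.

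It remains to establish directedness \eqref{equ5}, and here the only genuine work is producing witnesses for the cones. For the lower cone I claim $(x|y)'\in L(x,y)$: the condition $\bigl((x|y)',x\bigr)\in R$ unravels (using $z''=z$) to $(x|y)|x'=x$, which is \eqref{equ1}, and $\bigl((x|y)',y\bigr)\in R$ unravels to $(x|y)|y'=y$, which is \eqref{equ2}. For the upper cone I claim $x'|y'\in U(x,y)$; membership $(x,x'|y')\in R$ and $(y,x'|y')\in R$ amount, with $a:=x'$ and $b:=y'$, to the two identities
\[
a\mathbin{|}(a|b)'\approx a|b \quad\text{and}\quad b\mathbin{|}(a|b)'\approx a|b .
\]
Proving these is the one clever point of the argument — I expect it to be the main obstacle, everything else being mechanical. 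The trick is to apply the axioms to the term $a|b$ itself: set $c:=a|b$ and apply \eqref{equ1} with $x,y$ replaced by $c,a'$ to get $(c|a')|c'=c$; since $c|a'=(a|b)|a'=a$ by \eqref{equ1} again, this collapses to $a|c'=c$, the first identity. The second is identical, except that \eqref{equ2} is used to evaluate $c|b'=(a|b)|b'=b$. With these identities in hand $U(x,y)\neq\emptyset$ follows immediately, so $(A,R)$ is directed. (Alternatively, $L(x,y)\neq\emptyset$ could be deduced from $U(x,y)\neq\emptyset$ via the relation $L(x,y)=(U(x',y'))'$ recalled before the definition, now that $'$ is known to be an involution, but the direct witness above is just as short.) This completes the verification that $\mathbb R(\mathbf A)=(A,R,{}')$ is a directed relational system with involution.
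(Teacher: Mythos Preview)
Your proof is correct and follows essentially the same route as the paper's: both establish $x''\approx x$ and reflexivity from \eqref{equ1}, prove antitonicity via the substitution $b|a=(a'|b')|a=a'$, and exhibit $a'|b'\in U(a,b)$ for directedness. The only cosmetic difference is that the paper obtains $(a,a'|b')\in R$ by first reading $(a'|b')|a=a'$ as $\bigl((a'|b')',a'\bigr)\in R$ and then invoking the antitonicity it has just proved, whereas you verify the identity $a\,|\,(a|b)'=a|b$ by a direct computation---your ``clever trick'' is precisely that antitone step unpacked.
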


\begin{proof}
Let $a,b\in A$. (\ref{equ1}) implies $x''\approx x$ and that $R$ is reflexive. (\ref{equ1}) and (\ref{equ2}) can be written in the equivalent form $(x|y)|x'\approx x$ and $(x|y)|y'\approx y$, respectively. If $(a,b)\in R$ then $a'|b'=b$ and hence $b|a=(a'|b')|a=a'$, i.e.\ $(b',a')\in R$ showing that $'$ is an involution on $(A,R)$. Since $(a'|b')|a=a'$ and $(a'|b')|b=b'$ we have $((a'|b')',a'),((a'|b')',b')\in R$ and hence $(a,a'|b'),(b,a'|b')\in R$, i.e.\ $a'|b'\in U(a,b)$ which shows $U(a,b)\neq\emptyset$ proving that $(A,R)$ is directed.
\end{proof}

\begin{example}
$(A,A^2\setminus\{(a,b),(b,a)\},{}')$ where $a'=a$, $b'=b$, $c'=d$ and $d'=c$ is the directed relational system induced by the Sheffer groupoid $\mathbf A$ from Example~\ref{ex1}.
\end{example}

In the following we show that also conversely, to every directed relational system with involution a Sheffer groupoid can be assigned.

Let $\mathbf A=(A,R,{}')$ be a directed relational system with involution. Define a binary operation $|$ on $A$ as follows: Put $x|y:=y'$ if $(x',y')\in R$ and let $x|y$ be an arbitrary element of $U(x',y')$ otherwise {\rm(}$x,y\in A${\rm)}. Then $|$ will be called an {\em operation assigned to $\mathbf A$}.

\begin{lemma}\label{lem1}
Let $\mathbf A=(A,R,{}')$ be a directed relational system with involution and $|$ a binary operation on $A$. Then $|$ is assigned to $\mathbf A$ if and only if
\begin{enumerate}[{\rm(i)}]
\item $(x,y)\in R$ if and only if $x'|y'=y$,
\item $x|y\in U(x',y')$ for all $x,y\in A$.
\end{enumerate}
\end{lemma}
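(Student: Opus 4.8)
The statement is an ``if and only if'' characterising the operations assigned to $\mathbf A$ by two conditions, so the plan is to prove the two directions separately, the forward direction by unwinding the definition of an assigned operation and the backward direction by showing that (i) and (ii) force exactly the two-case description in that definition.

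For the forward direction, suppose $|$ is assigned to $\mathbf A$. To check (ii), fix $x,y\in A$: if $(x',y')\in R$ then by definition $x|y=y'$, and since $R$ is reflexive we have $(y',y')\in R$, so together with $(x',y')\in R$ this gives $y'\in U(x',y')$; if $(x',y')\notin R$ then by definition $x|y\in U(x',y')$ directly. So (ii) holds in both cases. To check (i), I would argue both implications. If $(x,y)\in R$, then applying antitonicity of $'$ and $x''\approx x$ we get $(y',x'')=(y',x)\in R$; but I actually need the relation between $x'$ and $y'$, so more carefully: $(x,y)\in R$ means by the involution that $(y',x')\in R$, hence by the defining clause $y'|x'=x'$, which is not quite what (i) asks. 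Let me instead phrase (i) as stated: $(x,y)\in R$ iff $x'|y'=y$. If $(x,y)\in R$, then $(y',x')\in R$ by involution, hence $((y')',(x')')=(y,x)$... this is getting circular, so the clean route is: $(x,y)\in R$ $\iff$ $((x')',(y')')\in R$ $\iff$ (by the defining clause applied with $x'$ in place of $x$ and $y'$ in place of $y$, using $x''=x$, $y''=y$) $x'|y'=y''=y$. The one subtlety is the converse: if $x'|y'=y$ but $(x,y)\notin R$, i.e.\ $((x')',(y')')\notin R$, then the ``otherwise'' clause says $x'|y'\in U((x')',(y')')=U(x,y)$, so $(x,x'|y')=(x,y)\in R$, a contradiction; hence $(x,y)\in R$ after all.

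For the backward direction, suppose $|$ satisfies (i) and (ii); I must show $|$ is an operation assigned to $\mathbf A$, i.e.\ that it obeys the two-case recipe. Fix $x,y\in A$. If $(x',y')\in R$, then applying (i) with $x'$ for $x$ and $y'$ for $y$ (using $x''=x$, $y''=y$) gives $x''|y''=y'$, i.e.\ $x|y=y'$, which is the first clause. If $(x',y')\notin R$, then (ii) already gives $x|y\in U(x',y')$, which is exactly what the ``otherwise'' clause permits (the recipe allows $x|y$ to be \emph{any} element of $U(x',y')$). Hence $|$ is assigned to $\mathbf A$.

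\textbf{Main obstacle.} There is no deep obstacle here; the statement is essentially a bookkeeping reformulation. The one place to be careful is the systematic use of the involution identity $x''\approx x$ together with antitonicity to pass between ``$(x,y)\in R$'' and ``$(x',y')\in R$'' and to align the variable substitutions in (i); a second minor point is that in the backward direction one must remember that the assigning recipe only requires $x|y$ to lie in $U(x',y')$ when $(x',y')\notin R$, so (ii) supplies more than is strictly needed but certainly enough.
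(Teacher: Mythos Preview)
Your argument is correct and matches the paper's own proof: the forward direction for (i) uses $x''=x$ to rewrite $(x,y)\in R$ as $((x')',(y')')\in R$ and then applies the defining clause (with the converse handled by the same contradiction via $U(x,y)$), while (ii) is the two-case check you give; the backward direction, which the paper dismisses with ``clearly'', is exactly your substitution of $x',y'$ into (i) together with (ii). Aside from the exploratory false starts, there is nothing to change.
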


\begin{proof}
\item Let $a,b\in A$. First assume $|$ to be assigned to $\mathbf A$. If $(a,b)\in R$ then $(a'',b'')\in R$ and hence $a'|b'=b''=b$. Conversely, assume $a'|b'=b$. Then $(a,b)\notin R$ would imply $(a'',b'')\notin R$ and hence $b=a'|b'\in U(a'',b'')=U(a,b)$ and hence $(a,b)\in R$, a contradiction. Hence $(a,b)\in R$. This shows (i). If $(a',b')\in R$ then $a|b=b'\in U(a',b')$. Otherwise, $a|b\in U(a',b')$, too. This shows (ii). Conversely, if $|$ satisfies (i) and (ii) then clearly $|$ is assigned to $\mathbf A$.
\end{proof}

It should be remarked that if $(A,R,{}')$ is a directed relational system with involution and $|$ an assigned operation then condition (ii) of Lemma~\ref{lem1} is equivalent to
\[
(x|y)|(x|y)\in L(x,y)\text{ for all }x,y\in A.
\]

In the following we will often use this lemma. Now we prove the converse of Theorem~\ref{th2}.

\begin{theorem}\label{th1}
Let $\mathbf A=(A,R,{}')$ be a directed relational system with involution and $|$ an operation assigned to $\mathbf A$. Then $|$ is a Sheffer operation, a so-called {\em Sheffer operation assigned to $\mathbf A$}, i.e.\ $\mathbb G(\mathbf A):=(A,|)$ is a Sheffer groupoid, a so-called {\em Sheffer groupoid assigned to $\mathbf A$}.
\end{theorem}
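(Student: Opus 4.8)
The plan is to derive the two Sheffer identities (\ref{equ1}) and (\ref{equ2}) directly from the characterization of assigned operations in Lemma~\ref{lem1}, after first recording the auxiliary identity $x|x\approx x'$.

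First I would note that since $R$ is reflexive we have $(x',x')\in R$ for every $x\in A$, so the defining recipe for an operation assigned to $\mathbf A$ gives $x|x=x'$ (this is the branch ``$x|y:=y'$ if $(x',y')\in R$'' applied with $y:=x$). Consequently the identities (\ref{equ1}) and (\ref{equ2}) are equivalent to $(x|y)|x'\approx x$ and $(x|y)|y'\approx y$, respectively, and it suffices to prove these.

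Now fix $a,b\in A$. By Lemma~\ref{lem1}(ii), $a|b\in U(a',b')$, i.e.\ $(a',a|b)\in R$ and $(b',a|b)\in R$. Applying the antitone involution $'$ and using $x''\approx x$ turns these into $((a|b)',a)\in R$ and $((a|b)',b)\in R$. Finally, Lemma~\ref{lem1}(i) applied to the pair $((a|b)',a)$ (together with $(a|b)''\approx a|b$) says precisely that $((a|b)',a)\in R$ is equivalent to $(a|b)|a'=a$; hence $(a|b)|a'=a$, which is (\ref{equ1}). In the same way $((a|b)',b)\in R$ yields $(a|b)|b'=b$, which is (\ref{equ2}). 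Therefore $|$ is a Sheffer operation and $(A,|)$ a Sheffer groupoid.

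The argument is short, and I expect no genuine obstacle; the one subtlety worth flagging is that an assigned operation is only partially determined, since the ``otherwise'' clause permits arbitrary choices within $U(x',y')$. Thus one must check that the two products occurring in (\ref{equ1}) and (\ref{equ2}) always land in the determined branch ``$(x',y')\in R$''. The computation above does exactly this: the relations $((a|b)',a),((a|b)',b)\in R$ pin down the values $(a|b)|a'$ and $(a|b)|b'$ regardless of any choices made elsewhere.
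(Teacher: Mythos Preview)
Your proof is correct and follows essentially the same route as the paper: establish $x|x\approx x'$ from reflexivity, then derive $((a|b)',a),((a|b)',b)\in R$ and read off the Sheffer identities. The only cosmetic difference is that the paper splits into the cases $(a',b')\in R$ and $(a',b')\notin R$, whereas your appeal to Lemma~\ref{lem1}(ii) handles both at once; this is a mild streamlining, not a different argument.
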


\begin{proof}
Let $a,b\in A$. Since $(x',x')\in R$ we have $x|x\approx x'$. If $(a',b')\in R$ then $(b,a)\in R$ and hence $(a|b)|a'=b'|a'=a$ and $(a|b)|b'=b'|b'=b$. If $(a',b')\notin R$ then $a|b\in U(a',b')$ and hence $(a',a|b),(b',a|b)\in R$ which implies $((a|b)',a),((a|b)',b)\in R$, i.e.\ $(a|b)a'=a$ and $(a|b)b'=b$.
\end{proof}

\begin{remark}
In general, $\mathbb G(\mathbf A)$ is not uniquely defined. However, it contains all the information on the directed relational system $\mathbf A$ with involution. In other words, the given directed relational system with involution can be completely recovered from an assigned Sheffer groupoid, see the following result.
\end{remark}

\begin{theorem}\label{th3}
Let $\mathbf A=(A,R,{}')$ be a directed relational system with involution. Then $\mathbb R(\mathbb G(\mathbf A))=\mathbf A$.
\end{theorem}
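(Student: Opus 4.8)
The plan is to unwind both constructions and check that the involution and the relation produced by $\mathbb R(\mathbb G(\mathbf A))$ coincide with the original ones on $\mathbf A$. Write $|$ for a fixed Sheffer operation assigned to $\mathbf A=(A,R,{}')$, let $\mathbb G(\mathbf A)=(A,|)$, and denote by $x^\ast:=x|x$ and $S:=\{(x,y)\in A^2\mid x^\ast|y^\ast=y\}$ the unary operation and binary relation that Theorem~\ref{th2} attaches to the Sheffer groupoid $(A,|)$; so $\mathbb R(\mathbb G(\mathbf A))=(A,S,{}^\ast)$. I must show $x^\ast=x'$ for all $x\in A$ and $S=R$.

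First I would handle the involution. Since $|$ is assigned to $\mathbf A$, for every $x\in A$ the pair $(x',x')$ lies in $R$ (reflexivity), so by the defining clause of an assigned operation $x|x=x''=x$\,; wait — more carefully, the clause gives $x|y:=y'$ when $(x',y')\in R$, and applying it with $y:=x$ and using $(x',x')\in R$ (reflexivity of $R$) yields $x|x=x''=x$. Hmm, that is not quite right either: it gives $x|x=(x)'=x'$ after simplifying $x''$. Let me restate: $x|x = x'$ because $(x',x')\in R$ forces $x|x=(x)'$; indeed the clause reads $x|y=y'$, so with $y=x$ we get $x|x=x'$. Hence $x^\ast=x|x=x'$, so the two unary operations agree. (This is exactly the computation already recorded inside the proof of Theorem~\ref{th1}.)

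Next I would show $S=R$. Using the previous paragraph, $x^\ast=x'$, so $S=\{(x,y)\mid x'|y'=y\}$. But part (i) of Lemma~\ref{lem1}, applied to the assigned operation $|$, says precisely that $(x,y)\in R$ if and only if $x'|y'=y$. Therefore $S=R$, and combining with $x^\ast=x'$ we get $\mathbb R(\mathbb G(\mathbf A))=(A,S,{}^\ast)=(A,R,{}')=\mathbf A$.

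I do not expect a serious obstacle here; the theorem is essentially a bookkeeping consequence of Lemma~\ref{lem1}. The only point that needs a little care is making sure the identity $x''\approx x$ in $\mathbf A$ is used correctly when simplifying $x|x$, and that Lemma~\ref{lem1}(i) is invoked in the right direction so that the relation $S$ defined purely from the groupoid matches the externally given $R$; once $x^\ast=x'$ is in hand, the definition of $S$ in Theorem~\ref{th2} becomes verbatim the criterion in Lemma~\ref{lem1}(i), and nothing further is required.
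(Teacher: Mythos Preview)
Your argument is correct and follows essentially the same route as the paper: compute $x^\ast=x|x=x'$ from reflexivity and the defining clause of an assigned operation, then invoke Lemma~\ref{lem1}(i) to identify $S=\{(x,y)\mid x'|y'=y\}$ with $R$. The only cosmetic issue is the stumbling in the middle paragraph (the clause $x|y=y'$ with $y=x$ gives $x|x=x'$ directly, with no $x''$ appearing), which you eventually straighten out; a clean write-up would simply state that step once.
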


\begin{proof}
If
\begin{align*}
           \mathbb G(\mathbf A) & =(A,|), \\
\mathbb R(\mathbb G(\mathbf A)) & =(A,S,{}^*)
\end{align*}
then according to the proof of Lemma~\ref{lem1},
\begin{align*}
  S & =\{(x,y)\in A^2\mid  x'|y'=y\}=\{(x,y)\in A^2\mid(x,y)\in R\}=R, \\
x^* & \approx x|x\approx x'.
\end{align*}
\end{proof}

On the other hand, we can show for which pairs of elements a Sheffer operation assigned to $\mathbb R(A,|)$ coincides with the Sheffer operation $|$ of a given Sheffer groupoid $(A,|)$.

\begin{theorem}
Let $\mathbf A=(A,|)$ be a Sheffer groupoid and $\mathbb G(\mathbb R(\mathbf A))=(A,\circ)$. Then $x\circ y=x|y$ if $x|y=y|y$.
\end{theorem}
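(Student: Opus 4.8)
The plan is to unwind both constructions and observe that the hypothesis $x|y=y|y$ is exactly the condition putting the pair $(x',y')$ into the relation $R$ of $\mathbb R(\mathbf A)$, which then pins down the value of any Sheffer operation assigned to $\mathbb R(\mathbf A)$ at the pair $(x,y)$.

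First I would record the relevant data. In $\mathbb R(\mathbf A)=(A,R,{}')$ one has $x'=x|x$ and $R=\{(u,v)\in A^2\mid u'|v'=v\}$, and by definition an operation $\circ$ assigned to $\mathbb R(\mathbf A)$ satisfies $u\circ v=v'$ whenever $(u',v')\in R$ (and $u\circ v\in U(u',v')$ otherwise), where $U$ is the upper cone with respect to $R$; thus $\mathbb G(\mathbb R(\mathbf A))=(A,\circ)$ for some such $\circ$. Next, since $(\ref{equ1})$ with $y:=x$ gives $x''=(x|x)|(x|x)=x$, the membership $(x',y')\in R$ unwinds, via the definition of $R$, to $(x')'|(y')'=y'$, that is, to $x|y=y'$; and because $y'=y|y$ this is precisely the condition $x|y=y|y$. (Alternatively one could quote Lemma~\ref{lem1}(i) for $\mathbb R(\mathbf A)$ and $\circ$ instead of unwinding $R$ by hand.)

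Finally, assuming the hypothesis $x|y=y|y$, the preceding step yields $(x',y')\in R$, so the first clause in the definition of the assigned operation applies and gives $x\circ y=y'=y|y=x|y$, as claimed. I do not expect a genuine obstacle here: the only things requiring care are tracking the single involution $z\mapsto z|z$ through the nested constructions $\mathbb G\circ\mathbb R$ and invoking $x''\approx x$, together with the observation that the value $x\circ y$ is unambiguous exactly when $(x',y')\in R$, so the non-uniqueness of $\mathbb G(\mathbb R(\mathbf A))$ plays no role on the pairs under consideration.
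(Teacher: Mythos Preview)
Your proposal is correct and follows essentially the same approach as the paper's proof: both arguments observe that $x|y=y|y$ means $x|y=y'$, which (via $x''\approx x$ and the definition of $R$) is equivalent to $(x',y')\in R$, and then the defining clause of an assigned operation forces $x\circ y=y'=x|y$. The paper merely compresses this into a five-line chain of implications, while you spell out the unwinding in prose; there is no substantive difference.
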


\begin{proof}
If $\mathbb R(\mathbf A)=(A,R,{}')$ then any of the following assertions implies the next one:
\begin{align*}
     x|y & =y|y, \\
     x|y & =y', \\
 (x',y') & \in R, \\
x\circ y & =y', \\
x\circ y & =x|y.
\end{align*}
\end{proof}

In fact, $\circ$ need not coincide with $|$ as can be seen by the following example.

\begin{example}
If $|$ is the Sheffer operation from Example~\ref{ex1} then $\circ$ has the operation table
\[
\begin{array}{c|cccc}
\circ & a & b & c & d \\
\hline
  a   & a & x & d & c \\
  b   & y & b & d & c \\
  c   & a & b & d & c \\
  d   & a & b & d & c
\end{array}
\]
where $x,y\in\{c,d\}$ since $U(a,b)=\{c,d\}$ in the induced relational system. Hence, if we take $x=d$ or $y=d$ then $\circ$ differs from $|$.
\end{example}

We have shown that directed relational systems with involution are nearly in a one-to-one correspondence with Sheffer groupoids. Analogously as for Boolean algebras where the Sheffer operation substitutes all other operations since they can be derived from it, also here the Sheffer operation substitutes both the binary relation and the unary operation. Hence it enables us to reduce the type of the directed relational system with involution.

\section{Elementary properties of relations}

In the following we characterize some of properties of the relation $R$ of a directed relational system $\mathbf A=(A,R,{}')$ with involution by means of identities and quasi-identities for a Sheffer operation assigned to $\mathbf A$.

\begin{theorem}
Let $\mathbf A=(A,R,{}')$ be a directed relational system with involution and $|$ an assigned Sheffer operation. Then $R$ is symmetric if and only if $|$ satisfies the identity
\begin{equation}
((x|y)|(x|y))|x\approx x|x.\label{equ7}
\end{equation}
\end{theorem}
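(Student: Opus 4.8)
The plan is to rewrite (\ref{equ7}) as a statement purely about $R$ and then verify the two implications separately. Recall from the proof of Theorem~\ref{th1} that $w|w\approx w'$ for every $w\in A$. Hence the left-hand side of (\ref{equ7}) equals $(x|y)'|x$ and the right-hand side equals $x'$, so (\ref{equ7}) is equivalent to the identity $(x|y)'|x\approx x'$. Now I would apply Lemma~\ref{lem1}(i) to the pair $(x|y,x')$: using $x''\approx x$, the equation $(x|y)'|x=x'$ holds if and only if $(x|y,x')\in R$. Therefore (\ref{equ7}) holds in $(A,|)$ if and only if $(x|y,x')\in R$ for all $x,y\in A$, and this is the reformulation I will work with.

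For the implication ``$R$ symmetric $\Rightarrow$ (\ref{equ7})'', fix $x,y\in A$. By Lemma~\ref{lem1}(ii) we have $x|y\in U(x',y')$, so in particular $(x',x|y)\in R$; symmetry of $R$ then yields $(x|y,x')\in R$, which is exactly the reformulated condition.

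For the converse, assume $(x|y,x')\in R$ for all $x,y\in A$ and take any $(a,b)\in R$. By Lemma~\ref{lem1}(i) this means $a'|b'=b$. Specialising the hypothesis to $x:=a'$ and $y:=b'$ gives $(a'|b',(a')')\in R$; substituting $a'|b'=b$ and $(a')'=a$ we obtain $(b,a)\in R$, so $R$ is symmetric.

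The argument is short, and the only point requiring a moment's thought — the ``main obstacle'', such as it is — is the choice of substitution in the converse direction: one has to notice that for a given pair $(a,b)\in R$ the element $b$ can be presented as $a'|b'$, which is precisely what allows the universally quantified condition $(x|y,x')\in R$ to be specialised so as to produce $(b,a)\in R$. Everything else is routine bookkeeping with $w|w\approx w'$, $w''\approx w$, and the two clauses of Lemma~\ref{lem1}.
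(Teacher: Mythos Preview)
Your proof is correct and follows essentially the same route as the paper's: both directions hinge on rewriting (\ref{equ7}) as $(x|y)'|x\approx x'$, deducing the forward implication from $x|y\in U(x',y')$ together with symmetry, and obtaining the converse by substituting $x\mapsto a'$, $y\mapsto b'$ after noting that $(a,b)\in R$ gives $a'|b'=b$. The only cosmetic difference is that you pass explicitly through the reformulation $(x|y,x')\in R$ via Lemma~\ref{lem1}(i), whereas the paper works directly with the equational form $(x|y)'|x=x'$.
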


\begin{proof}
If $R$ is symmetric then any of the following assertions implies the next one:
\begin{align*}
            x|y & \in U(x',y'), \\
       (x',x|y) & \in R, \\
       (x|y,x') & \in R, \\
       (x|y)'|x & \approx x', \\
((x|y)|(x|y))|x & \approx x|x.
\end{align*}
If, conversely, $|$ satisfies identity (\ref{equ7}) then any of the following assertions implies the next one:
\begin{align*}
(x,y) & \in R, \\
x'|y' & =y, \\
y'|x' & =(x'|y')'|x'=x, \\
(y,x) & \in R.
\end{align*}
\end{proof}

Another important property of a binary relation is antisymmetry. Recall that a binary relation $R$ is antisymmetric if $(x,y),(y,x)\in R$ implies $x=y$.

\begin{theorem}\label{th4}
Let $\mathbf A=(A,R,{}')$ be a directed relational system with involution and $|$ a Sheffer operation assigned to it. Then the following hold:
\begin{enumerate}[{\rm(i)}]
\item $R$ is antisymmetric if and only if $x|y=y'$ and $y|x=x'$ imply $x=y$.
\item If $x|y\approx y|x$ then $R$ is antisymmetric.
\end{enumerate}
\end{theorem}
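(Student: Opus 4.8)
The plan is to deduce both parts from Lemma~\ref{lem1}(i) together with the fact that $'$ is a bijection of $A$ onto itself. The key preliminary observation comes from substituting $x'$ for $x$ and $y'$ for $y$ in Lemma~\ref{lem1}(i) and invoking $x''\approx x$: for all $x,y\in A$ one has
\[
(x',y')\in R\ \Longleftrightarrow\ x|y=y'.
\]

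For part~(i) I would argue as follows. Because the map $a\mapsto a'$ is a bijection of $A$, the relation $R$ is antisymmetric exactly when, for all $x,y\in A$, the conjunction $(x',y')\in R$ and $(y',x')\in R$ forces $x'=y'$. By the displayed equivalence the hypothesis of this implication translates into ``$x|y=y'$ and $y|x=x'$'', and, $'$ being injective, the conclusion $x'=y'$ translates into $x=y$. Hence $R$ is antisymmetric if and only if $x|y=y'$ and $y|x=x'$ imply $x=y$, which is the assertion. Concretely, in the forward direction one takes $(a,b),(b,a)\in R$, puts $x:=a'$ and $y:=b'$, reads off $x|y=y'$ and $y|x=x'$ from the equivalence, applies the assumed implication to obtain $a'=b'$, and concludes $a=b$; the backward direction consists of the same steps in reverse.

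For part~(ii) this is then an immediate corollary of~(i): assuming the identity $x|y\approx y|x$, if $x|y=y'$ and $y|x=x'$ then $y'=x|y=y|x=x'$, whence $x=x''=y''=y$; so the implication in~(i) is satisfied and $R$ is antisymmetric.

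I do not anticipate any genuine obstacle here; the only point requiring a little care is that the universal quantifier over $A$ in the definition of antisymmetry is preserved under the reparametrisation $x\mapsto x'$, $y\mapsto y'$, which is legitimate precisely because $'$ is an involution and hence bijective.
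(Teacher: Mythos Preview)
Your proposal is correct and follows essentially the same route as the paper. The paper disposes of (i) with the single phrase ``is clear'' and proves (ii) exactly as you do; your write-up simply makes explicit the reparametrisation via the bijection $'$ that underlies why (i) is clear.
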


\begin{proof}
\
\begin{enumerate}[(i)]
\item is clear.
\item This follows from (i) since $x|y\approx y|x$, $x|y=y'$ and $y|x=x'$ imply $x=(y|x)'=(x|y)'=y$.
\end{enumerate}
\end{proof}

Transitivity of a binary relation can be expressed by an identity for an assigned Sheffer operation as follows.

\begin{theorem}
Let $\mathbf A=(A,R,{}')$ be a directed relational system with involution and $|$ an assigned Sheffer operation. Then $R$ is transitive if and only if $|$ satisfies the identity
\begin{equation}
x|(((x|y)|(x|y))|z)|(((x|y)|(x|y))|z)\approx((x|y)|(x|y))|z.\label{equ8}
\end{equation}
\end{theorem}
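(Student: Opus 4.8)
The plan is to recognise that, despite its length, identity (\ref{equ8}) is nothing but a single membership statement about $R$, after which both directions become one-line arguments. Throughout I fix $x,y,z\in A$ and put $w:=((x|y)|(x|y))|z$, so that (\ref{equ8}) reads $x|(w|w)\approx w$; recall $u'=u|u$ and $u''=u$ for every $u$. Since $(x')'|w'=x|(w|w)$, Lemma~\ref{lem1}(i), applied to the pair $(x',w)$, shows that for a fixed triple $x,y,z$ identity (\ref{equ8}) holds iff $(x',w)\in R$; hence $|$ satisfies (\ref{equ8}) iff $(x',w)\in R$ for all $x,y,z$. I would then record two cone facts that are all one needs. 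First, $x|y\in U(x',y')$ by Lemma~\ref{lem1}(ii), so $(x',x|y)\in R$. Second, since $((x|y)|(x|y))'=(x|y)''=x|y$, applying Lemma~\ref{lem1}(ii) to the product $((x|y)|(x|y))|z$ gives $w\in U(x|y,z')$, hence $(x|y,w)\in R$.

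For the direction ``$R$ transitive implies (\ref{equ8})'' I would simply chain these: from $(x',x|y)\in R$ and $(x|y,w)\in R$, transitivity yields $(x',w)\in R$ for every $x,y,z$, which by the reformulation above is exactly (\ref{equ8}).

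For the converse I would assume (\ref{equ8}) and take $a,b,c\in A$ with $(a,b),(b,c)\in R$, aiming at $(a,c)\in R$. The key move is to instantiate (\ref{equ8}) at $x:=a'$, $y:=b'$, $z:=c'$. Using Lemma~\ref{lem1}(i), $(a,b)\in R$ gives $a'|b'=b$, so here $x|y=b$ and $(x|y)|(x|y)=b|b=b'$; likewise $(b,c)\in R$ gives $b'|c'=c$, so $w=((x|y)|(x|y))|z=b'|c'=c$. Thus this instance of (\ref{equ8}) is $a'|(c|c)=a'|c'=c$, which by Lemma~\ref{lem1}(i) is precisely $(a,c)\in R$. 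Hence $R$ is transitive.

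The translations through Lemma~\ref{lem1} and the cone identities ($u''=u$, $((x|y)|(x|y))'=x|y$) are routine bookkeeping. The one genuinely non-mechanical step will be the substitution $x\mapsto a'$, $y\mapsto b'$, $z\mapsto c'$ in the converse: one must see that under the hypotheses $(a,b),(b,c)\in R$ the bulky term $((x|y)|(x|y))|z$ degenerates to $c$ while $x'$ degenerates to $a$, so that the lone relation $(x',w)\in R$ hidden inside (\ref{equ8}) becomes $(a,c)\in R$. Dually, in the forward direction the guiding observation is that $((x|y)|(x|y))|z$ always lies in $U(x|y,z')$, which is what produces the two-step $R$-chain on which transitivity acts.
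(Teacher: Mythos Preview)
Your proposal is correct and follows essentially the same route as the paper's own proof: both directions hinge on rewriting the identity as $x|w'\approx w$ with $w=(x|y)'|z$, i.e.\ as the single membership $(x',w)\in R$, then using $x|y\in U(x',y')$ and $w\in U(x|y,z')$ for the forward chain and the substitution $x\mapsto a'$, $y\mapsto b'$, $z\mapsto c'$ to collapse $w$ to $c$ in the converse. Your presentation is a bit more explicit in invoking Lemma~\ref{lem1}, but the argument is the same.
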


\begin{proof}
If $R$ is transitive then any of the following assertions implies the next one:
\begin{align*}
                      x|y\in U(x',y') & \text{ and }(x|y)'|z\in U(x|y,z'), \\
              (x',x|y),(x|y,(x|y)'|z) & \in R, \\
                        (x',(x|y)'|z) & \in R, \\
                        x|((x|y)'|z)' & \approx (x|y)'|z, \\
x|(((x|y)|(x|y))|z)|(((x|y)|(x|y))|z) & \approx((x|y)|(x|y))|z.
\end{align*}
If, conversely, $|$ satisfies identity (\ref{equ8}) then any of the following assertions implies the next one:
\begin{align*}
(x,y),(y,z) & \in R, \\
    x'|y'=y & \text{ and }y'|z'=z, \\
      x'|z' & =x'|(y'|z')'=x'|((x'|y')'|z')'=(x'|y')'|z'=y'|z'=z, \\
      (x,z) & \in R.
\end{align*}
\end{proof}

Let us introduce the following concepts. A {\em bounded relational system with involution} is an ordered quintuple $\mathbf A=(A,R,{}',$ $0,1)$ such that $(A,R,{}')$ is a directed relational system with involution, $0,1\in A$ and $(0,x),(x,1)\in R$ hold for all $x\in A$. $\mathbf A$ is called {\em complemented} if it is bounded and if $U(x,x')\approx1\approx0'$. In such a case $L(x,x')\approx0$. Also these properties of relational systems can be characterized  by identities and quasi-identities for an assigned Sheffer operation.

\begin{theorem}
Let $(A,R,{}')$ be a directed relational system with involution and $|$ a Sheffer operation assigned to it. Moreover, let $0,1\in A$ and put $\mathbf A:=(A,R,{}',0,1)$. Then the following hold:
\begin{enumerate}[{\rm(i)}]
\item $\mathbf A$ is bounded if and only if it satisfies the identities $(0|0)|x\approx x|x$ and $x|(1|1)\approx1$.
\item $\mathbf A$ is complemented if it is bounded, $0|0\approx1$ and if for every $x,y\in A$,
\[
x|(y|y)=(x|x)|(y|y)=y\text{ implies }y=1.
\]
\end{enumerate}
\end{theorem}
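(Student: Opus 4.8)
The plan is to reduce both parts to Lemma~\ref{lem1}, exactly in the spirit of the preceding theorems of this section: one translates each relational condition on $R$ and on the cones into an equation for $|$ by means of the equivalence ``$(u,v)\in R$ precisely when $u'|v'=v$'' together with the abbreviation $x'=x|x$ (recall that for an assigned operation $x|x=x'$, because $R$ is reflexive).

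For (i), I would first suppose $\mathbf A$ is bounded, so that $(0,x),(x,1)\in R$ for all $x\in A$; since $'$ is a bijection this is equivalent to $(0,x'),(x',1)\in R$ for all $x$, which by Lemma~\ref{lem1}(i) and $x''\approx x$ reads $0'|x''=x'$ and $x''|1'=1$, i.e. $(0|0)|x\approx x|x$ and $x|(1|1)\approx1$. Conversely, assuming these two identities, substitution of $x'$ for $x$ in the first gives $(0|0)|x'=x'|x'=x''=x$, hence $0'|x'=x$ and so $(0,x)\in R$ by Lemma~\ref{lem1}(i); substitution of $x'$ for $x$ in the second gives $x'|(1|1)=1$, hence $x'|1'=1$ and so $(x,1)\in R$. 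Thus $\mathbf A$ is bounded.

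For (ii), I would assume $\mathbf A$ is bounded, $0|0\approx1$, and that the displayed quasi-identity holds. Then $0'=0|0=1$ at once, so it remains to show $U(x,x')=\{1\}$ for every $x$. Boundedness gives $(x,1),(x',1)\in R$, whence $1\in U(x,x')$. Conversely, if $y\in U(x,x')$ then $(x,y),(x',y)\in R$, which by Lemma~\ref{lem1}(i) and $x''\approx x$ says $x'|y'=y$ and $x|y'=y$, that is, $(x|x)|(y|y)=y$ and $x|(y|y)=y$; the quasi-identity then forces $y=1$. Hence $U(x,x')=\{1\}$, and together with $0'=1$ (so also $L(x,x')=\{0\}$, as noted after the definition) this says exactly that $\mathbf A$ is complemented.

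The computations here are short; the only point that needs attention is the bookkeeping of replacing $x$ by $x'$ (legitimate since $'$ is a permutation of $A$) when passing between ``for all $x$'' statements about $R$ and identities for $|$ — without this step the resulting identities would come out phrased in terms of $x'$ rather than $x$, and the match with the stated identities would be obscured. I do not anticipate any genuine obstacle: every step is a direct application of Lemma~\ref{lem1}.
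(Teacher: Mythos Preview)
Your argument is correct and follows essentially the same route as the paper: both parts reduce the relational conditions to identities via Lemma~\ref{lem1}(i) and the abbreviation $x'=x|x$, with the paper simply stating in (i) that $(0,x')\in R$ and $(x',1)\in R$ are equivalent to $0'|x\approx x'$ and $x|1'\approx1$, and in (ii) displaying the chain of equivalences between $y\in U(x,x')$ and $x|(y|y)=(x|x)|(y|y)=y$. Your version is just more explicit about the converse direction in (i) and about why $1\in U(x,x')$ in (ii).
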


\begin{proof}
\
\begin{enumerate}[(i)]
\item The assertions $(0,x')\in R$ and $(x',1)\in R$ are equivalent to $0'|x\approx x'$ and $x|1'\approx1$, respectively.
\item The following are equivalent:
\begin{align*}
           y & \in U(x,x'), \\
(x,y),(x',y) & \in R, \\
        x|y' & =x'|y'=y, \\
     x|(y|y) & =(x|x)|(y|y)=y.
\end{align*}
\end{enumerate}
\end{proof}

As mentioned in Section~2, the class of Sheffer groupoids forms a variety $\mathcal V$. We can ask one more condition, namely commutativity of $|$. As shown in Theorems~\ref{th3} and \ref{th4}, the directed relational systems with involution induced by commutative Sheffer groupoids will have antisymmetric binary relations. We present a subvariety of $\mathcal V$ containing all commutative Sheffer groupoids which has an important congruence property.

We recall that a variety $\mathcal V$ of algebras is called {\em congruence distributive} if every member   of $\mathcal V$ has a distributive congruence lattice.

\begin{theorem}
The variety of Sheffer groupoids $(A,|)$ satisfying the identities
\begin{eqnarray}
& & (x|y)|(x|x)\approx(x|x)|(x|y),\label{equ3} \\
& & (x|y)|(y|y)\approx(y|y)|(x|y)\label{equ9}
\end{eqnarray}
is congruence distributive.
\end{theorem}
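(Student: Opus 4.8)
The plan is to establish congruence distributivity by producing J\'onsson terms; since a variety possessing a majority term is congruence distributive, it suffices to exhibit a ternary term $m(x,y,z)$ with $m(x,x,y)\approx m(x,y,x)\approx m(y,x,x)\approx x$ (then $t_0(x,y,z):=x$, $t_1:=m$, $t_2(x,y,z):=z$ are J\'onsson terms). It is convenient to abbreviate $x':=x|x$, $x\wedge y:=(x|y)|(x|y)$ and $x\vee y:=(x|x)|(y|y)$, which in a Boolean algebra are the complement, meet and join; the candidate is then the lattice median rewritten in these operations, $m(x,y,z):=\bigl(x\wedge(y\vee z)\bigr)\vee(y\wedge z)$.

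First I would collect the elementary identities. Setting $y:=x$ in (\ref{equ1}) gives $x''\approx x$, whence $x\wedge x\approx x\approx x\vee x$, and from the definitions one reads off the De Morgan laws $(x\vee y)'\approx x'\wedge y'$ (immediate) and $(x\wedge y)'\approx x'\vee y'$ (using $x''\approx x$). Rewriting (\ref{equ1}) and (\ref{equ2}) as $(x|y)|x'\approx x$ and $(x|y)|y'\approx y$ and inserting them into (\ref{equ3}) and (\ref{equ9}) respectively yields $x'|(x|y)\approx x$ and $y'|(x|y)\approx y$; replacing $x,y$ by $x',y'$ one also gets $x|(x'|y')\approx x'$ and $x|(y'|x')\approx x'$. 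From this stock the absorption identities $x\vee(x\wedge y)\approx x$, $(x\wedge y)\vee x\approx x$, $y\vee(x\wedge y)\approx y$, $(x\wedge y)\vee y\approx y$ follow --- here (\ref{equ3}) is used to move the ``left'' factor $x|x$ past $x|y$ and (\ref{equ9}) to move the ``right'' factor $y|y$ --- together with their De Morgan duals $x\wedge(x\vee y)\approx x$, $(x\vee y)\wedge x\approx x$, $y\wedge(x\vee y)\approx y$, $(x\vee y)\wedge y\approx y$.

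Then the three majority identities are verified directly. One has $m(x,x,y)=\bigl(x\wedge(x\vee y)\bigr)\vee(x\wedge y)\approx x\vee(x\wedge y)\approx x$; next $m(x,y,x)=\bigl(x\wedge(y\vee x)\bigr)\vee(y\wedge x)$, where $x\wedge(y\vee x)\approx x$ because $x|(y'|x')\approx x'$, and then $x\vee(y\wedge x)\approx x$ via the identity $(x|x)|(y|x)\approx x$ coming from (\ref{equ9}) and (\ref{equ2}); finally $m(y,x,x)=(y\wedge x)\vee x\approx x$ using $x\vee x\approx x$, $x\wedge x\approx x$ and the (renamed) absorption identity $(x\wedge y)\vee x\approx x$. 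Hence $m$ is a majority term and the variety is congruence distributive.

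The one genuine obstacle is that the induced operations $\wedge$ and $\vee$ are in this variety neither commutative nor associative (commutativity of $|$ would secure everything at once, but it is not assumed), so one cannot merely cite ``the median of a lattice is a majority term''. This is exactly why the bracketing $m=(x\wedge(y\vee z))\vee(y\wedge z)$ is chosen: it never requires joining two separate meets, and every simplification above must be pushed through by applying the absorption and De Morgan identities in a fixed order. The bookkeeping to highlight is that (\ref{equ3}) and (\ref{equ9}) play complementary roles --- one commuting $x|x$, the other $y|y$, to the opposite side of a product --- and that both, along with $x''\approx x$ from (\ref{equ1}), are indispensable.
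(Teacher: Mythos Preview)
Your proof is correct and follows the same strategy as the paper: exhibit a majority term and invoke J\'onsson's theorem. The paper, however, uses the more compact term
\[
m(x,y,z):=\bigl((x|y)|(x|z)\bigr)'|(y|z)
\]
and verifies $m(x,z,z)\approx z$, $m(x,y,x)\approx x$, $m(x,x,z)\approx x$ in three lines directly from (\ref{equ1}), (\ref{equ2}), (\ref{equ3}), (\ref{equ9}), without introducing the derived operations $\wedge,\vee$ or any absorption laws. Your term expands to $(x|(y'|z'))|(y|z)$, which is a genuinely different Sheffer term (both reduce to the lattice median in the Boolean case, via the two standard forms $(x\wedge(y\vee z))\vee(y\wedge z)$ and $(x\wedge y)\vee(x\wedge z)\vee(y\wedge z)$ respectively). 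Your route has the virtue of making transparent that (\ref{equ3}) and (\ref{equ9}) furnish exactly the one-sided commutativity needed for absorption; the paper's route is shorter and avoids the bookkeeping of non-associative, non-commutative $\wedge$ and $\vee$.
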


\begin{proof}
If $x':=x|x$ and $m(x,y,z):=((x|y)|(x|z))'|(y|z)$ then
\begin{align*}
m(x,z,z) & \approx((x|z)|(x|z))'|(z|z)\approx(x|z)|z'\approx z\text{ by (\ref{equ2})}, \\
m(x,y,x) & \approx((x|y)|(x|x))'|(y|x)\approx x'|(y|x)\approx x\text{ by (\ref{equ1}), (\ref{equ9}) and (\ref{equ2})}, \\
m(x,x,z) & \approx((x|x)|(x|z))'|(x|z)\approx x'|(x|z)\approx x\text{ by (\ref{equ3}) and (\ref{equ1})}.
\end{align*}
\end{proof}

\section{Kleene relational systems and twist-products}

At first, we show how homomorphisms of Sheffer groupoids are related with homomorphisms of induced directed relational systems with involution. Because in the literature there are different concepts of homomorphism of relational systems, we recall the following one.

Let $(A,R)$ and $(B,S)$ be relational systems. A mapping $f:A\rightarrow B$ is called a {\em homomorphism} from $(A,R)$ to $(B,S)$ if
\[
(x,y)\in R\text{ implies }(f(x),f(y))\in S.
\]
A {\em homomorphism} f is called {\em strong} if
\[
(x,y)\in R\text{ if and only if }(f(x),f(y))\in S.
\]
If $(A,R,{}')$ and $(B,S,^*)$ are relational systems with unary operation then $f$ is a {\em homomorphism} from $(A,R,{}')$ to $(B,S,^*)$ if it is a homomorphism from $(A,R)$ to $(B,S)$ satisfying
\[
f(x')=(f(x))^*\text{ for all }x\in A.
\]

\begin{theorem}\label{th7}
Let $\mathbf A=(A,|_A)$ and $\mathbf B=(B,|_B)$ be Sheffer groupoids and $f$ a homomorphism from $\mathbf A$ to $\mathbf B$. Then $f$ is a homomorphism between the induced directed relational systems $\mathbb R(\mathbf A)$ and $\mathbb R(\mathbf B)$ with involution.
\end{theorem}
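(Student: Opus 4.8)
The plan is to unwind the definitions of the induced relation and induced involution on both sides and use the fact that $f$ preserves the Sheffer operation $|$. Recall that on $\mathbf A$ we have $x' = x|_A x$ and $R_{\mathbf A} = \{(x,y)\in A^2\mid x'|_A y' = y\}$, and similarly on $\mathbf B$ with $|_B$; write the involution on $\mathbf B$ as ${}^*$. So the two things to check are that $f(x') = (f(x))^*$ for all $x\in A$, and that $(x,y)\in R_{\mathbf A}$ implies $(f(x),f(y))\in R_{\mathbf B}$.

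First I would verify the unary part: for any $x\in A$, $f(x') = f(x|_A x) = f(x)|_B f(x) = (f(x))^*$, using only that $f$ is a groupoid homomorphism together with the defining formulas for the induced involutions. Next I would handle the relational part: suppose $(x,y)\in R_{\mathbf A}$, i.e.\ $x'|_A y' = y$. Applying $f$ and using that it is a homomorphism twice, $f(y) = f(x'|_A y') = f(x')|_B f(y') = (f(x))^* |_B (f(y))^*$, where the last equality uses the unary part just established. But $(f(x))^*|_B (f(y))^* = f(y)$ is exactly the condition for $(f(x),f(y))\in R_{\mathbf B}$. Hence $f$ maps $R_{\mathbf A}$ into $R_{\mathbf B}$, and combined with the preservation of ${}'$ this says precisely that $f$ is a homomorphism from $\mathbb R(\mathbf A) = (A,R_{\mathbf A},{}')$ to $\mathbb R(\mathbf B) = (B,R_{\mathbf B},{}^*)$.

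This argument is essentially a routine diagram chase, so I do not expect any real obstacle; the only point requiring a little care is the order of operations — one must first establish $f(x') = (f(x))^*$ before using it inside the relational computation, since the relation $R$ is defined in terms of the derived operation ${}'$ rather than directly in terms of $|$. It is also worth noting that this proof shows nothing about strongness: even if $f$ is injective or the groupoids are chosen carefully, $(f(x),f(y))\in R_{\mathbf B}$ need not pull back to $(x,y)\in R_{\mathbf A}$, because $R_{\mathbf B}$ is defined via $|_B$ and the converse implication $f(y) = (f(x))^*|_B(f(y))^* \Rightarrow y = x'|_A y'$ would require $f$ to reflect equalities, i.e.\ injectivity. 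So the statement is stated (correctly) only for plain homomorphisms, and I would not attempt to upgrade it.
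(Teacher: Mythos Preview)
Your proof is correct and follows essentially the same argument as the paper: first derive $f(x') = (f(x))^*$ from the homomorphism property, then use this to push the defining equation $x'|_A y' = y$ of $R_{\mathbf A}$ through $f$ to obtain $(f(x))^*|_B(f(y))^* = f(y)$, i.e.\ $(f(x),f(y))\in R_{\mathbf B}$. The additional remarks about strongness are accurate but go beyond what the paper states or proves.
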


\begin{proof}
Let $a,b\in A$, $\mathbb R(\mathbf A)=(A,R,{}')$ and $\mathbb R(\mathbf B)=(B,S,{}^*)$. We have $f(x')\approx f(x|_Ax)\approx f(x)|_Bf(x)\approx(f(x))^*$ and hence any of the following assertions implies the next one:
\begin{align*}
              (a,b) & \in R, \\
            a'|_Ab' & =b, \\
         f(a'|_Ab') & =f(b), \\
      f(a')|_Bf(b') & =f(b), \\
(f(a))^*|_B(f(b))^* & =f(b), \\
        (f(a),f(b)) & \in S.
\end{align*}
\end{proof}

For the converse direction, we firstly mention the following result for bounded relational systems.

\begin{lemma}
Let $(A,R,{}',0_A,1_A)$ and $(B,S,^*,0_B,1_B)$ be bounded relational systems with involution and $f$ a strong homomorphism from $\mathbf A=(A,R,{}')$ to $\mathbf B=(B,S,^*)$. Further assume that $f(1_A)=1_B$. Define binary operations $|_A$ and $|_B$ on $A$ and $B$, respectively, by
\[
x|_Ay:=\left\{
\begin{array}{ll}
y'  & \text{if }(x',y')\in R, \\
1_A & \text{otherwise}
\end{array}
\right.
\quad\quad x|_By:=\left\{
\begin{array}{ll}
y^* & \text{if }(x^*,y^*)\in S, \\
1_B & \text{otherwise}
\end{array}
\right.
\]
Then $(A,|_A)$ and $(B,|_B)$ are Sheffer groupoids assigned to $\mathbf A$ and $\mathbf B$, respectively, and $f$ is a homomorphism from $(A,|_A)$ to $(B,|_B)$.
\end{lemma}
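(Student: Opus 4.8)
The plan is to check, first, that the operations $|_A$ and $|_B$ defined in the statement are operations assigned to $\mathbf A$ and $\mathbf B$ in the sense fixed before Lemma~\ref{lem1}, and then to verify the homomorphism condition by a two-case argument. For the first point, the displayed definition of $|_A$ already uses the value $y'$ precisely when $(x',y')\in R$, so the only thing to verify is that the ``otherwise'' value $1_A$ lies in $U(x',y')$. Since $\mathbf A$ is bounded, $(z,1_A)\in R$ for every $z\in A$; in particular $(x',1_A)\in R$ and $(y',1_A)\in R$, i.e.\ $1_A\in U(x',y')$. Hence $|_A$ is a legitimate choice of an operation assigned to $\mathbf A$, and by Theorem~\ref{th1} the groupoid $(A,|_A)$ is a Sheffer groupoid assigned to $\mathbf A$; the identical argument (using $(z,1_B)\in S$ for all $z\in B$) shows $(B,|_B)$ is a Sheffer groupoid assigned to $\mathbf B$.

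For the second point I would show $f(x|_Ay)=f(x)|_Bf(y)$ for all $x,y\in A$. Recall that $f$, being a homomorphism of relational systems with involution, satisfies $f(x')=(f(x))^*$. If $(x',y')\in R$, then $x|_Ay=y'$, so $f(x|_Ay)=(f(y))^*$; and since $f$ is strong, $(x',y')\in R$ yields $\big((f(x))^*,(f(y))^*\big)\in S$, hence $f(x)|_Bf(y)=(f(y))^*$, and the two sides coincide. If $(x',y')\notin R$, then $x|_Ay=1_A$, so $f(x|_Ay)=f(1_A)=1_B$ by hypothesis; and since $f$ is strong, $(x',y')\notin R$ forces $\big((f(x))^*,(f(y))^*\big)\notin S$, hence $f(x)|_Bf(y)=1_B$. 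In either case $f(x|_Ay)=f(x)|_Bf(y)$, so $f$ is a homomorphism from $(A,|_A)$ to $(B,|_B)$.

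The argument is essentially routine once the definitions are unwound, so there is no serious obstacle; the only points that genuinely matter are (a) that boundedness is exactly what makes the ``$1_A$'' branch compatible with the requirement that an assigned operation take a value in $U(x',y')$ there, (b) that the \emph{strongness} of $f$ is indispensable in the second case, since from a mere homomorphism one could not conclude $\big((f(x))^*,(f(y))^*\big)\notin S$ and then $f(x)|_Bf(y)$ might be $(f(y))^*$ instead of $1_B$, and (c) that the hypothesis $f(1_A)=1_B$ is precisely what is needed to evaluate $f$ on that same branch.
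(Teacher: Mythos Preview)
Your proof is correct and follows essentially the same route as the paper's own argument: the paper dismisses the first part with ``Obviously, $(A,|_A)$ and $(B,|_B)$ are Sheffer groupoids'' and then carries out exactly your two-case verification of $f(a|_Ab)=f(a)|_Bf(b)$ using $f(x')=(f(x))^*$, strongness, and $f(1_A)=1_B$. Your added justification that $1_A\in U(x',y')$ (and hence that $|_A$ is an assigned operation) is the content the paper suppresses behind ``obviously''.
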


\begin{proof}
Let $a,b\in A$. Obviously, $(A,|_A)$ and $(B,|_B)$ are Sheffer groupoids. If $(a',b')\in R$ then $((f(a))^*,(f(b))^*)=(f(a'),f(b'))\in S$ and hence $f(a|_Ab)=f(b')=(f(b))^*=f(a)|_Bf(b)$. If $(a',b')\notin R$ then $((f(a))^*,(f(b))^*)=(f(a'),f(b'))\notin S$ and hence $f(a|_Ab)=f(1_A)=1_B=f(a)|_Bf(b)$.
\end{proof}

We are going to determine conditions under which the converse of Theorem~\ref{th7} holds.

\begin{theorem}
Let $\mathbf A=(A,R,{}')$ and $\mathbf B=(B,S,{}^*)$ be directed relational systems with involution, $f$ a strong surjective homomorphism from $\mathbf A$ to $\mathbf B$ and $|_A$ a Sheffer operation assigned to $\mathbf A$ and assume that the equivalence relation $\ker f$ on $A$ is a congruence on $(A,|_A)$. Then there exists a Sheffer operation $|_B$ on $B$ such that $f$ is a homomorphism from $(A,|_A)$ to $(B,|_B)$ and $|_B$ is assigned to $\mathbf B$.
\end{theorem}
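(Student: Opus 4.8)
The plan is to define $|_B$ on $B$ by transporting $|_A$ along $f$ in the obvious way and then check that this is well-defined, that $f$ becomes a groupoid homomorphism, and that the resulting operation is assigned to $\mathbf B$. Concretely, for $u,v\in B$ choose preimages $a\in f^{-1}(u)$, $b\in f^{-1}(v)$ (possible since $f$ is surjective) and set $u|_B v:=f(a|_A b)$. The first step is to verify this does not depend on the choice of $a,b$: if $f(a_1)=f(a_2)$ and $f(b_1)=f(b_2)$, then $(a_1,a_2),(b_1,b_2)\in\ker f$, and since $\ker f$ is a congruence on $(A,|_A)$ we get $(a_1|_A b_1,\,a_2|_A b_2)\in\ker f$, i.e.\ $f(a_1|_A b_1)=f(a_2|_A b_2)$. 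Hence $|_B$ is a well-defined binary operation, and by construction $f(a|_A b)=f(a)|_B f(b)$ for all $a,b\in A$, so $f$ is a homomorphism from $(A,|_A)$ to $(B,|_B)$. By Theorem~\ref{th1} we already know $(A,|_A)$ is a Sheffer groupoid, and since Sheffer groupoids form a variety and $f$ is a surjective homomorphism, $(B,|_B)$ is a Sheffer groupoid as well; in particular the induced unary operation on $B$ is $u\mapsto u|_B u$, and for $u=f(a)$ this equals $f(a|_A a)=f(a')=(f(a))^*=u^*$, so the involution induced by $|_B$ coincides with $^*$.

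It remains to show $|_B$ is assigned to $\mathbf B$, for which I would use the characterization in Lemma~\ref{lem1}: it suffices to check (i) $(u,v)\in S\iff u^*|_B v^*=v$, and (ii) $u|_B v\in U_S(u^*,v^*)$ for all $u,v\in B$. For (i), write $u=f(a)$, $v=f(b)$. Since $f$ is a \emph{strong} homomorphism, $(u,v)\in S\iff(a,b)\in R$. If $(a,b)\in R$ then, as $|_A$ is assigned to $\mathbf A$, Lemma~\ref{lem1}(i) gives $a'|_A b'=b$, whence $u^*|_B v^*=f(a')|_B f(b')=f(a'|_A b')=f(b)=v$. Conversely, suppose $u^*|_B v^*=v$. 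By surjectivity, $f(a')|_B f(b')=f(a'|_A b')$, so $f(a'|_A b')=f(b)$, i.e.\ $(a'|_A b',\,b)\in\ker f$. Here is the one place requiring care: from $f(a'|_A b')=f(b)$ I want to conclude $(a,b)\in R$, hence $(u,v)\in S$. Since $|_A$ is assigned, $a'|_A b'\in U_R(a'',b'')=U_R(a,b)$ always, so $(a,\,a'|_A b'),(b,\,a'|_A b')\in R$; applying the strong homomorphism $f$, $(u,\,f(a'|_A b'))\in S$ and $f(a'|_A b')=f(b)=v$, giving $(u,v)\in S$ directly. (So in fact strongness plus surjectivity do the job without needing the quasi-identity from the antisymmetric case.) For (ii): $a|_A b\in U_R(a',b')$ means $(a',a|_A b),(b',a|_A b)\in R$; applying $f$ and using that $f$ carries $R$ into $S$ and commutes with the involutions, $(u^*,\,u|_B v),(v^*,\,u|_B v)\in S$, i.e.\ $u|_B v\in U_S(u^*,v^*)$.

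Putting these together, Lemma~\ref{lem1} tells us $|_B$ is an operation assigned to $\mathbf B$, and we have already seen $f$ is a homomorphism from $(A,|_A)$ to $(B,|_B)$; this is exactly the claim. The main obstacle — and really the only nontrivial point — is the ``conversely'' half of condition (i): recovering $(u,v)\in S$ from the equation $u^*|_B v^*=v$. The hypothesis that $f$ is \emph{strong} (not merely a homomorphism) is what makes this go through cleanly, since it lets us transport the membership $a'|_A b'\in U_R(a,b)$ back and forth across $f$; a plain homomorphism would only give one direction and one would be stuck, mirroring why Theorem~\ref{th4} needs an extra quasi-identity in the antisymmetric situation. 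The congruence hypothesis on $\ker f$ is used solely for well-definedness of $|_B$ and is indispensable there.
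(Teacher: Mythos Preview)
Your proof is correct and follows essentially the same approach as the paper: define $|_B$ by $f(x)|_Bf(y):=f(x|_Ay)$, use the congruence hypothesis on $\ker f$ for well-definedness, and then exploit strongness of $f$ to verify that $|_B$ is assigned to $\mathbf B$. The only organizational difference is that the paper verifies the \emph{definition} of ``assigned'' directly via a case split on whether $((f(a))^*,(f(b))^*)\in S$, whereas you verify the equivalent characterization of Lemma~\ref{lem1}; your aside about Theorem~\ref{th4} and antisymmetry is irrelevant here and could be dropped.
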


\begin{proof}
Define $f(x)|_Bf(y):=f(x|_Ay)$ for all $x,y\in A$. Since $\ker f\in\Con(A,|_A)$, $|_B$ is well-defined. Let $a,b\in A$. Then any of the following assertions implies the next one:
\begin{align*}
((f(a))^*,(f(b))^*) & \in S, \\
      (f(a'),f(b')) & \in S, \\
            (a',b') & \in R, \\
              a|_Ab & =b', \\
           f(a|_Ab) & = f(b'), \\
        f(a)|_Bf(b) & =(f(b))^*.
\end{align*}
Moreover, any of the following assertions implies the next one:
\begin{align*}
((f(a))^*,(f(b))^*) & \notin S, \\
      (f(a'),f(b')) & \notin S, \\
            (a',b') & \notin R, \\
              a|_Ab & \in U(a',b'), \\
           f(a|_Ab) & \in U(f(a'),f(b')), \\
        f(a)|_Bf(b) & \in U((f(a))^*,(f(b))^*).
\end{align*}
This shows that $|_B$ is a Sheffer operation on $B$ assigned to $\mathbf B$. According to the definition of $|_B$ we have $f(x|_Ay)=f(x)|_Bf(y)$ for all $x,y\in A$.
\end{proof}

For a lattice $\mathbf L=(L,\vee,\wedge)$ its twist-product $(L^2,\sqcup,\sqcap)$ is defined by
\begin{align*}
(x,y)\sqcup(z,v) & :=(x\vee z,v\wedge y), \\
(x,y)\sqcap(z,v) & :=(x\wedge z,v\vee y)
\end{align*}
for all $(x,y),(z,v)\in L^2$. We extend this concept to relational systems as follows.

Let $A$ be a non-void set and $R$ a binary relation on $A$. Then $(A^2,S,{}^*)$ with
\begin{align*}
      S & :=\{((x,y),(z,v))\in(A^2)^2\mid(x,z),(v,y)\in R\}, \\
(x,y)^* & :=(y,x)
\end{align*}
for all $(x,y)\in A^2$ will be called the {\em twist-product of $(A,R)$}.

Recall that an {\em embedding} of a relational system $\mathbf A$ into a relational system $\mathbf B$ is an injective strong homomorphism from $\mathbf A$ to $\mathbf B$.

The importance of twist-products is illuminated by the next result.

\begin{theorem}\label{th5}
Let $\mathbf A=(A,R)$ be a relational system, $a\in A$ and $\mathbf B=(A^2,S,{}^*)$ the twist-product of $\mathbf A$. Then the following hold:
\begin{enumerate}[{\rm(i)}]
\item If $\mathbf A$ is directed then $\mathbf B$ is a directed relational system with involution $^*$,
\item the mapping $x\mapsto(x,a)$ is an embedding of $\mathbf A$ into $(A^2,S)$.
\end{enumerate}
\end{theorem}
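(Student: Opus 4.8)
The plan is to verify the two assertions directly from the definitions of the twist-product, using the directedness of $\mathbf A$ for part~(i) and a straightforward check for part~(ii). For~(i), I must establish the three defining conditions of a directed relational system with involution: reflexivity of $S$, directedness of $(A^2,S)$, and that $^*$ is an involution on $(A^2,S)$. Reflexivity is immediate: $((x,y),(x,y))\in S$ holds iff $(x,x)\in R$ and $(y,y)\in R$, which follows from reflexivity of $R$ (note $\mathbf A$ directed does \emph{not} give reflexivity, so I would instead observe that directedness is what we are given and reflexivity of $R$ is \emph{not} assumed here --- wait, re-examining: the twist-product target is required to be a directed relational system with involution, which by Definition needs $S$ reflexive; so I should check whether directedness of $R$ forces what is needed, or whether reflexivity of $R$ must be assumed). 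I would handle this by noting that for the statement to be correct the relevant reflexivity of $S$ must be derivable; if $R$ is reflexive then so is $S$ as shown above, and I would state the hypothesis accordingly, matching the paper's running conventions. For the involution: $(x,y)^{**}=(y,x)^*=(x,y)$ gives $z^{**}\approx z$; antitonicity means $((x,y),(z,v))\in S$ implies $((z,v)^*,(x,y)^*)=((v,z),(y,x))\in S$, which unwinds to requiring $(v,y)\in R$ and $(x,z)\in R$ --- exactly the two conditions defining membership of $((x,y),(z,v))$ in $S$, so antitonicity holds trivially (indeed $S$ is built to be symmetric under $^*$).

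The substantive point in~(i) is directedness of $(A^2,S)$. Given $(x,y),(z,v)\in A^2$, I need an upper cone element: some $(p,q)$ with $((x,y),(p,q))\in S$ and $((z,v),(p,q))\in S$, i.e.\ $(x,p),(z,p)\in R$ and $(q,y),(q,v)\in R$. Since $\mathbf A$ is directed, pick $p\in U(x,z)$ (nonempty) and $q\in L(y,v)$ (nonempty); then $(p,q)\in U\bigl((x,y),(z,v)\bigr)$ with respect to $S$. Dually, using $p'\in L(x,z)$ and $q'\in U(y,v)$ gives a nonempty lower cone. Hence $(A^2,S)$ is directed. Combining the three verifications yields that $\mathbf B$ is a directed relational system with involution $^*$.

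For~(ii), fix $a\in A$ and set $g(x):=(x,a)$. Injectivity is clear since $g(x)=g(y)$ forces $x=y$ in the first coordinate. For the strong homomorphism property: $(g(x),g(y))=\bigl((x,a),(y,a)\bigr)\in S$ iff $(x,y)\in R$ and $(a,a)\in R$; so if $R$ is reflexive this is equivalent to $(x,y)\in R$, giving strongness. Again this is where reflexivity of $R$ enters, and I would phrase~(ii) so that the hypothesis used is the one actually available (the paper treats directed relational systems, and directedness plus the context suggest $R$ reflexive is in force here, or that the embedding claim is into $(A^2,S)$ without a standing reflexivity assumption should be checked --- I would flag that one direction, $(x,y)\in R\Rightarrow(g(x),g(y))\in S$, needs $(a,a)\in R$). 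The main obstacle, then, is not any calculation but the bookkeeping around reflexivity: ensuring that the ambient hypotheses make both the reflexivity of $S$ in~(i) and the strongness of $g$ in~(ii) go through, which I expect to resolve by invoking reflexivity of $R$ exactly as the surrounding development of the paper does.
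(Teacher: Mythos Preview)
Your proof follows the same route as the paper's: derive reflexivity of $S$ from $(x,x),(y,y)\in R$, obtain directedness via the identification $U\bigl((x,y),(z,v)\bigr)=U(x,z)\times L(y,v)$, verify that $^*$ is an involution by unwinding the definition of $S$, and for (ii) check injectivity together with the equivalence $((x,a),(y,a))\in S\Leftrightarrow(x,y)\in R$. Your hesitation about reflexivity is warranted and not just bookkeeping: the paper's proof silently uses $(a,a),(b,b)\in R$ in part~(i) and $(a,a)\in R$ in part~(ii) even though the stated hypotheses (mere directedness in~(i), nothing extra in~(ii)) do not literally provide it; the paper is implicitly treating $R$ as reflexive throughout, exactly as you guessed, so your resolution---invoke the ambient convention---matches what the authors do.
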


\begin{proof}
Let $a,b,c,d\in A$.
\begin{enumerate}[(i)]
\item Assume $\mathbf A$ to be directed. Since $(a,a),(b,b)\in R$ we have $((a,b),(a,b))\in S$ showing reflexivity of $S$. Because of $U((a,b),(c,d))=U(a,c)\times L(b,d)$, $(A^2,S)$ is directed. Moreover, $(x,y)^{**}\approx(y,x)^*\approx(x,y)$, and the following are equivalent:
\begin{align*}
    ((a,b),(c,d)) & \in S, \\
      (a,c),(d,b) & \in R, \\
      (d,b),(a,c) & \in R, \\
    ((d,c),(b,a)) & \in S, \\
((c,d)^*,(a,b)^*) & \in S.
\end{align*}
Hence, $^*$ is an involution on $(A^2,S)$.
\item The mapping $x\mapsto(x,a)$ is injective. Moreover, $((b,a),(c,a))\in S$ if and only if $(b,c)\in R$.
\end{enumerate}
\end{proof}

Hence, every directed relational system can be embedded into a directed relational system with involution.

The question arises whether a Sheffer operation assigned to the twist-product of a directed relational $\mathbf A$ with involution can be derived from a Sheffer operation assigned to $\mathbf A$. We give a positive answer in the following theorem.

\begin{theorem}
Let $(A,R,{}')$ be a directed relational system with involution, $|_A$ an assigned Sheffer operation on $A$ and define
\[
(x,y)|_B(z,v):=(y'|_Av',(x|_Az)')
\]
for all $(x,y),(z,v)\in A^2$. Then $|_B$ is a Sheffer operation on $A^2$ assigned to the twist-product of $(A,R)$.
\end{theorem}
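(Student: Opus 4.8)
The plan is to reduce everything to the characterization of assigned operations in Lemma~\ref{lem1}. Since $(A,R,{}')$ is a directed relational system with involution, the relational system $(A,R)$ is in particular directed, so by Theorem~\ref{th5}~(i) the twist-product $\mathbf B=(A^2,S,{}^*)$ of $(A,R)$ is itself a directed relational system with involution; hence it is meaningful to ask whether $|_B$ is assigned to $\mathbf B$. By Lemma~\ref{lem1} this amounts to verifying (i) that $((x,y),(z,v))\in S$ if and only if $(x,y)^*|_B(z,v)^*=(z,v)$, and (ii) that $(x,y)|_B(z,v)\in U((x,y)^*,(z,v)^*)$ for all $(x,y),(z,v)\in A^2$; once this is done, the claim that $|_B$ is a Sheffer operation follows immediately from Theorem~\ref{th1}. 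Throughout I would use that $|_A$ satisfies conditions (i) and (ii) of Lemma~\ref{lem1} relative to $\mathbf A$, that $'$ is an antitone involution (so $x''\approx x$ and $(a,b)\in R$ iff $(b',a')\in R$), and the cone identity $U((a,b),(c,d))=U(a,c)\times L(b,d)$ established in the proof of Theorem~\ref{th5}.

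For (i), using $(x,y)^*=(y,x)$ and $(z,v)^*=(v,z)$ together with the definition of $|_B$, I would compute
\[
(x,y)^*|_B(z,v)^*=(y,x)|_B(v,z)=\bigl(x'|_Av',\,(y|_Av)'\bigr),
\]
which equals $(z,v)$ precisely when $x'|_Av'=z$ and $y|_Av=v'$. By condition (i) of Lemma~\ref{lem1} for $|_A$, the first equation is equivalent to $(x,z)\in R$; applying the same condition with the two arguments taken to be $y'$ and $v'$, the second equation is equivalent to $(y',v')\in R$, hence to $(v,y)\in R$ because $'$ is an involution. Since $((x,y),(z,v))\in S$ means exactly $(x,z)\in R$ and $(v,y)\in R$, this yields (i).

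For (ii), the cone identity gives $U((x,y)^*,(z,v)^*)=U((y,x),(v,z))=U(y,v)\times L(x,z)$, whereas $(x,y)|_B(z,v)=\bigl(y'|_Av',\,(x|_Az)'\bigr)$. Condition (ii) of Lemma~\ref{lem1} for $|_A$ gives $y'|_Av'\in U(y'',v'')=U(y,v)$, which settles the first coordinate. For the second, the same condition gives $x|_Az\in U(x',z')$, that is, $(x',x|_Az),(z',x|_Az)\in R$; applying the antitone operation $'$ and using $x''\approx x$, $z''\approx z$ we obtain $((x|_Az)',x),((x|_Az)',z)\in R$, i.e.\ $(x|_Az)'\in L(x,z)$. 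Hence $(x,y)|_B(z,v)\in U(y,v)\times L(x,z)$, which is (ii).

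The computations are routine; the one point that requires care is the bookkeeping with the two involutions — tracking which coordinate is primed in the definition of $|_B$, which coordinate is swapped by $^*$, and moving primes in and out of $R$, $U$ and $L$ via antitonicity and $x''\approx x$. I do not expect any genuine obstacle beyond keeping these substitutions straight and citing the correct half of Lemma~\ref{lem1} at each step.
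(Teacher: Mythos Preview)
Your approach is correct and essentially identical to the paper's: both verify the two conditions of Lemma~\ref{lem1} for $|_B$ relative to the twist-product, reducing each to the corresponding condition for $|_A$ via the involution and the cone identity $U((a,b),(c,d))=U(a,c)\times L(b,d)$. One slip to fix: in your computation of $(y,x)|_B(v,z)$ the first coordinate should be $x'|_Az'$, not $x'|_Av'$ (and correspondingly the equation equivalent to $(x,z)\in R$ is $x'|_Az'=z$); with that correction everything goes through exactly as you describe.
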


\begin{proof}
For $a,b,c,d\in A$ the following are equivalent:
\begin{align*}
 ((a,b)^*,(c,d)^*) & \in S, \\
     ((b,a),(d,c)) & \in S, \\
       (b,d),(c,a) & \in R, \\
 (b'',d''),(a',c') & \in R, \\
   (b'|_Ad',a|_Ac) & =(d'',c'), \\
(b'|_Ad',(a|_Ac)') & =(d,c), \\
     (a,b)|_B(c,d) & =(d,c), \\
     (a,b)|_B(c,d) & =(c,d)^*
\end{align*}
and the following are equivalent:
\begin{align*}
   (b'|_Ad',a|_Ac) & \in U(b'',d'')\times U(a',c'), \\
(b'|_Ad',(a|_Ac)') & \in U(b,d)\times L(a,c), \\
(b'|_Ad',(a|_Ac)') & \in U((b,a),(d,c)), \\
     (a,b)|_B(c,d) & \in U((a,b)^*,(c,d)^*).
\end{align*}
\end{proof}

In order to simplify notation we extend binary relations between elements of a non-void set $A$ to relations between subsets of $A$.

Let $A$ be a non-void set, $b,c$ be elements of $A$, $B,C$ be subsets of $A$ and $R$ be a binary relation on $A$. We say $(B,C)\in R$ if $B\times C\subseteq R$. Instead of $(\{b\},C)\in R$ and $(B,\{c\})\in R$ we shortly write $(b,C)\in R$ and $(B,c)\in R$, respectively.

The concept of a Kleene lattice was introduced by J.~A.~Kalman (\cite K). Recall that a distributive lattice $(L,\vee,\wedge,{}')$ with antitone involution is called a {\em Kleene lattice} if it satisfies the so-called {\em normality condition}, i.e.\ the identity
\[
x\wedge x'\leq y\vee y'\text{ for all }x,y\in L.
\]
These lattices are used in logic in order to formalize certain De Morgan propositional logics. For posets with involution, this notion was already generalized by the authors in \cite{CL} in the following way: A distributive poset $(P,\leq,{}')$ with involution is called a {\em Kleene poset} if
\[
L(x,x')\leq U(y,y')\text{ for all }x,y\in P
\]
which means that $z\leq v$ for all $x,y\in P$ and all $(z,v)\in L(x,x')\times U(y,y')$.

\begin{definition}
\
\begin{enumerate}[{\rm(i)}]
\item A {\em Kleene relational system} is a relational system $(A,R,{}')$ with an antitone involution satisfying
\[
(L(x,x'),U(y,y'))\in R\text{ for all }x,y\in A.
\]
\item If $\mathbf A=(A,R)$ is a relational system, $a\in A$ and $(A^2,S,$ ${}^*)$ the twist-product of $\mathbf A$ then we define the following subset of $A^2$:
\[
P_a(\mathbf A):=\{(x,y)\in A^2\mid(L(x,y),a),(a,U(x,y))\in R\}.
\]
\end{enumerate}
\end{definition}

It is worth noticing that Kleene lattices and Kleene posets are Kleene relational systems according to our previous definition.

Using the above defined subset of the twist-product, we can show that every directed relational system with a transitive relation can be embedded into a Kleene relational system.

\begin{theorem}
Let $\mathbf A=(A,R)$ be a directed relational system, $a\in A$, $(A^2,S,{}^*)$ the twist-product of $\mathbf A$ and $T:=S\cap(P_a(\mathbf A))^2$. Then the following hold:
\begin{enumerate}[{\rm(i)}]
\item If $R$ is transitive then $(P_a(\mathbf A),T,{}^*)$ is a directed relational system with involution which is a Kleene relational system,
\item the mapping $x\mapsto(x,a)$ is an embedding of $\mathbf A$ into $(P_a(\mathbf A),T)$.
\end{enumerate}
\end{theorem}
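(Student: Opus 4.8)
The plan is to verify the two parts separately, relying on Theorem~\ref{th5} for the structural facts about the full twist-product and then cutting down to $P_a(\mathbf A)$.

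For part (ii), note first that the map $x\mapsto(x,a)$ already lands in $A^2$ and is an embedding of $\mathbf A$ into $(A^2,S)$ by Theorem~\ref{th5}(ii); since $T$ is the restriction of $S$ to $P_a(\mathbf A)$, it suffices to check that $(x,a)\in P_a(\mathbf A)$ for every $x\in A$ and that the strong-homomorphism property survives the restriction. The membership $(x,a)\in P_a(\mathbf A)$ amounts to $(L(x,a),a)\in R$ and $(a,U(x,a))\in R$. If $z\in L(x,a)$ then in particular $(z,a)\in R$, and if $w\in U(x,a)$ then $(a,w)\in R$; so both conditions hold \emph{using only reflexivity-type consequences already built into the cones}, hence the image lies in $P_a(\mathbf A)$. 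Strongness is then immediate: for $x,y\in A$ we already have $((x,a),(y,a))\in S\iff(x,y)\in R$ from Theorem~\ref{th5}(ii), and $T=S\cap(P_a(\mathbf A))^2$ changes nothing once we know both pairs are in $P_a(\mathbf A)$.

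For part (i), assume $R$ is transitive. The first task is to show $(P_a(\mathbf A),T,{}^*)$ is a directed relational system with involution. Reflexivity of $T$ follows because $((x,y),(x,y))\in S$ always (Theorem~\ref{th5}(i)) and if $(x,y)\in P_a(\mathbf A)$ the pair stays in $P_a(\mathbf A)^2$. That $^*$ restricts to an involution needs $P_a(\mathbf A)$ to be closed under $(x,y)\mapsto(y,x)$: indeed the defining condition of $P_a(\mathbf A)$ is visibly symmetric in the two coordinates (both $L$ and $U$ are symmetric in their arguments), so $(x,y)\in P_a(\mathbf A)\iff(y,x)\in P_a(\mathbf A)$, and the equivalences in the proof of Theorem~\ref{th5}(i) showing $((c,d)^*,(a,b)^*)\in S\iff((a,b),(c,d))\in S$ carry over verbatim. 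Directedness is where transitivity is used: given $(x,y),(z,v)\in P_a(\mathbf A)$, Theorem~\ref{th5}(i) gives $U((x,y),(z,v))=U(x,z)\times L(y,v)$ inside $A^2$, and I must produce an element of this set that also lies in $P_a(\mathbf A)$. The natural candidate is some $(p,q)$ with $p\in U(x,z)$, $q\in L(y,v)$; one then has to check $(L(p,q),a)\in R$ and $(a,U(p,q))\in R$, and here transitivity of $R$ together with the fact that $(x,y),(z,v)$ already satisfy the $P_a$-condition lets one chain $L(p,q)\to$ (something related to $L(x,y)$ or $L(z,v)$) $\to a$, and dually for $U$.

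The Kleene condition $(L((x,y),(x,y)^*),U((z,v),(z,v)^*))\in T$ for all $(x,y),(z,v)\in P_a(\mathbf A)$ is the heart of the matter. Unwinding the twist-product, $(x,y)^*=(y,x)$, so by the cone formulas $L((x,y),(y,x))=L(x,y)\times U(y,x)=L(x,y)\times U(x,y)$ and $U((z,v),(v,z))=U(z,v)\times L(z,v)$. Thus a typical element of the left cone is $(s,t)$ with $s\in L(x,y)$, $t\in U(x,y)$, and of the right cone is $(s',t')$ with $s'\in U(z,v)$, $t'\in L(z,v)$; membership in $S$ of the pair $((s,t),(s',t'))$ means $(s,s')\in R$ and $(t',t)\in R$. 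Now $s\in L(x,y)$ and $(x,y)\in P_a(\mathbf A)$ give $(s,a)\in R$; $s'\in U(z,v)$ and $(z,v)\in P_a(\mathbf A)$ give $(a,s')\in R$; transitivity yields $(s,s')\in R$. Dually $t'\in L(z,v)$ gives $(t',a)\in R$, $t\in U(x,y)$ gives $(a,t)\in R$, so $(t',t)\in R$. Hence the pair is in $S$, and since both cone elements are built from points of $P_a(\mathbf A)$ I must finally confirm they lie in $P_a(\mathbf A)$ — again a routine transitivity chase — so the pair is in $T$. The main obstacle I anticipate is precisely this bookkeeping: making sure every auxiliary element produced (in the directedness argument and in verifying $P_a(\mathbf A)$-membership of cone elements) genuinely satisfies the two-sided $P_a$-condition, which is where transitivity of $R$ is indispensable and where a careless choice of witness would break the argument.
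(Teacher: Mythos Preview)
Your overall strategy coincides with the paper's: reduce to Theorem~\ref{th5} for the ambient twist-product, check closure of $P_a(\mathbf A)$ under~$^*$, and verify the Kleene condition by pivoting through the element~$a$. For the Kleene part your element-wise argument in~$R$ is exactly the paper's argument (the paper phrases it as $(L((b,c),(c,b)),(a,a))\in S$ and $((a,a),U((d,e),(e,d)))\in S$, then transitivity of~$S$). One simplification: your closing worry about showing that the cone elements $(s,t)$, $(s',t')$ lie in $P_a(\mathbf A)$ is unnecessary. The Kleene condition for $(P_a(\mathbf A),T,{}^*)$ refers to the cones $L_T$ and $U_T$ computed \emph{inside} $P_a(\mathbf A)$, whose elements are in $P_a(\mathbf A)$ by definition; since $L_T\subseteq L_S$ and $U_T\subseteq U_S$, your verification that the pair lies in~$S$ already suffices.

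The genuine gap is directedness of $(P_a(\mathbf A),T)$. The paper simply invokes Theorem~\ref{th5}, which only concerns the full twist-product $(A^2,S,{}^*)$, so the point is glossed over there as well; but your proposed patch does not work. For an arbitrary $p\in U(x,z)$ and $q\in L(y,v)$ and $w\in L(p,q)$ you obtain $(w,y),(w,v)\in R$ via~$q$, yet there is no way to get $(w,x)$ or $(w,z)$, so $w$ need not lie in $L(x,y)$ or $L(z,v)$ and the chain to~$a$ breaks. The fix is to choose the witness more carefully: using directedness and transitivity of~$R$, pick $p_0\in U(x,z)$ and then $p\in U(p_0,a)$, so that $p\in U(x,z)$ and $(a,p)\in R$; dually pick $q\in L(y,v)$ with $(q,a)\in R$. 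Then for any $w\in L(p,q)$ we have $(w,q),(q,a)\in R$, hence $(w,a)\in R$, and for any $w\in U(p,q)$ we have $(a,p),(p,w)\in R$, hence $(a,w)\in R$; thus $(p,q)\in P_a(\mathbf A)$ and is the required $T$-upper bound. Lower bounds are handled symmetrically (or via~$^*$).
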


\begin{proof}
Let $(b,c),(d,e)\in P_a(\mathbf A)$.
\begin{enumerate}[(i)]
\item Put $\mathbf B:=(P_a(\mathbf A),T,{}^*)$. From $(b,c)\in P_a(\mathbf A)$ we conclude $(L(b,c),a),(a,U(b,c))\in R$ and hence $(L(c,b),a),(a,U(c,b))\in R$, i.e.\ $(b,c)^*=(c,b)\in P_a(\mathbf A)$ which shows that $P_a(\mathbf A)$ is closed with respect to $^*$. According to Theorem~\ref{th5}, $\mathbf B$ is a directed relational system with involution. Because of
\begin{align*}
(L((b,c),(c,b)),(a,a))=(L(b,c)\times U(b,c),(a,a)) & \in S, \\
((a,a),U(d,e)\times L(d,e))=((a,a),U((d,e),(e,d))) & \in S
\end{align*}
we have $(L((b,c),(c,b)),U((d,e),(e,d)))\in S$ due to transitivity of $S$ (which follows from the transitivity of $R$) and hence $\mathbf B$ is a Kleene relational system.
\item For all $x\in A$ we have $(L(x,a),a),(a,U(x,a))\in R$ and hence $(x,a)\in P_a(\mathbf A)$. The rest follows from Theorem~\ref{th5}.
\end{enumerate}
\end{proof}

It should be remarked that if $R$ is transitive then $(P_a(\mathbf A),T,{}^*)$ is a relational subsystem of the twist-product $(A^2,S,{}^*)$ of $\mathbf A$.

Authors' addresses:

Ivan Chajda \\
Palack\'y University Olomouc \\
Faculty of Science \\
Department of Algebra and Geometry \\
17.\ listopadu 12 \\
771 46 Olomouc \\
Czech Republic \\
ivan.chajda@upol.cz

Helmut L\"anger \\
TU Wien \\
Faculty of Mathematics and Geoinformation \\
Institute of Discrete Mathematics and Geometry \\
Wiedner Hauptstra\ss e 8-10 \\
1040 Vienna \\
Austria, and \\
Palack\'y University Olomouc \\
Faculty of Science \\
Department of Algebra and Geometry \\
17.\ listopadu 12 \\
771 46 Olomouc \\
Czech Republic \\
helmut.laenger@tuwien.ac.at

\end{document}